\newtheorem{teo}{\bf Theorem}
\newtheorem*{teo*}{\bf Theorem}
\newtheorem{theo}{\bf Theorem}
\newtheorem{lema}{\bf Lemma}[section]
\newtheorem{cor}[lema]{\bf Corollary}
\newtheorem{propo}[lema]{\bf Proposition}
\newtheorem{defi}[lema]{\bf Definition}
\newtheorem{prob}{\bf Problem}
\theoremstyle{definition}
\newtheorem{remas}[lema]{\bf Remarks}
\newtheorem{example}[lema]{\bf Example}
\newcommand{\NN}{\mathbb{N}}       
\newcommand{\RR}{\mathbb{R}}       
\newcommand{\CC}{\mathbb{C}}       
\renewcommand{\phi}{\varphi}
\renewcommand{\leq}{\leqslant}
\renewcommand{\geq}{\geqslant}
\newcommand{\diff}{\mathrm{Diff}}
\newcommand{\cA}{{\mathcal{A}}}
\newcommand{\diffdiss}{\diff_{\operatorname{SDiss}}}
  \def\CC{{\mathbb C}} \def\DD{{\mathbb D}}
 \def\NN{{\mathbb N}}  
 \def\RR{{\mathbb R}}
\title{Strongly dissipative surface diffeomorphisms}
\author{Sylvain Crovisier\footnote{S.C. is partially supported by the ERC project 692925 -- NUHGD.}, \; Enrique  Pujals\footnote{E.R.P. is partialy supported by  CNPq.}}
\date{\today}
\begin{document}

\maketitle

\begin{abstract}
We introduce a class of volume-contracting surface diffeomorphisms whose
dynamics is intermediate between one-dimensional dynamics and general surface
dynamics. For that type of systems one can associate
to the dynamics a reduced one-dimensional model and it is proved a type of
$C^\infty$-closing lemma on the support of every ergodic measure.
We also show that this class contains H\'enon maps {\color{black} with Jacobian in $(-1/4,1/4)$.}
 \end{abstract}

\section{ Strong dissipation}

We would like to highlight two main topics in dynamical systems:
the $C^\infty$-closing lemma and the search of reduced dynamical models that encapsulates the main properties of an open class of systems. 

The first one was posed by Poincar\'e and refers to the problem of finding periodic orbits nearby recurrent points either for a system or a $C^r$-perturbation. It was solved in \cite{Pu} for the $C^1$-category. In higher topology it remains widely open except for certain particular classes of maps: 
rational maps of the Riemann sphere~\cite{F,J}, one-dimensional real endomorphisms~\cite{LSY}, and recently Hamiltonian
surface diffeomorphisms~\cite{AI}. For general surfaces diffeomorphisms, it remains completely open. 

The second topic is present in the whole theory of dynamics and consists in looking for simplified models that could extract the main features of systems. That approach goes from finding discrete topological representations using symbolic dynamics, first return maps for continuous dynamical systems and reducing the dimension of the space. 

Related to that problem, Poincar\'e realized  that flows may be reduced
to discrete systems using one-codimensional sections.
For instance,  the H\'enon type map $(x, y)\mapsto (1-ax^2 + y, bx)$
appears naturally~\cite{henon} as  the first return map of some three-dimensional
flows. In case the flow is dissipative, the corresponding invertible system is area
contracting, which heuristically means that the essential dynamics is confined to a one-dimensional subspace; so the initial dynamics may
share some essential features of the dynamics of the interval map $f(x)=1-ax^2.$
In practice, the two-dimensional systems are much more difficult to describe:
much less is known for the H\'enon map than for the quadratic family. Moreover,
many features that are not detected by the one-dimensional reduction have no one-dimensional counterpart:
for instance there exists a residual set of H\'enon maps exhibiting infinitely many periodic attractors with unbounded period (Newhouse phenomenon), but generic smooth one-dimensional maps have an upper bound on the period of the attracting periodic points.

Here, we introduce a new class of dissipative surface diffeomorphisms that we call {\it strongly dissipative diffeomorphisms}  that captures certain properties of one dimensional map but keeps two-dimensional features showing all the well known complexity of dissipative surface diffeomorphisms. The dynamics of the new class, in some sense, is intermediate between one-dimensional dynamics and general surface diffeomorphism. Moreover, under some
hypothesis on the relation of the Jacobian with the $C^1$-norm and the oscillation of the Jacobian in the attractor, it is proved that the strong dissipativeness is an open property (see theorem \ref{p.strong}) and it is satisfied by diffeomorphisms close to one dimensional endomorphisms, proving in particular, that the new class is also non empty (see theorem \ref{t.1D-bis}).
The class of strongly dissipative surface diffeomorphisms may be compared to the class of moderately dissipative complex H\'enon maps
considered in~\cite{LP}: using different tools, an upper bound on the Jacobian is used in order to control stable manifolds.

The theory of  real one-dimensional dynamics is leveraged on the order structure of the interval, a feature that does not exist for the plane. However, under dissipativeness, almost every point of any ergodic measure has a stable manifold that could help ``to order the trajectories''; in fact, the strong dissipativeness hypothesis, which is nothing else than assuming that the stable manifolds separate an attracting domain (see definition \ref{SD defi}), helps to recover in the particular case of the disk a partial order and to induce a rich one-dimensional structure.   Using that simple observation, the dynamics can be reduced to a continuous non-invertible map acting on an ordered one-dimensional path connected metric space (see theorem \ref{reduction}). One can hope that this result could  lead to obtain many others that hold for one-dimensional systems.
Note that for general surface homeomorphism, another reduction has been developed by Le Calvez~\cite{L}: it provides a foliation transverse to the
dynamics on the complement to maximal sets of fixed points; in this case the foliation is in general not invariant and the leaves are not proper.

A clear result that highlights the richness of the strongly dissipative class is our last theorem (see theorem \ref{t.measure revisited}) that shows that  the periodic points are dense in the support of any invariant measure (in particular, for the H\'enon type maps, see corollary \ref{c.henon}); in that sense, we get a $C^\infty$-closing lemma (without perturbing) for invariant measures. This was proved in \cite{K} for hyperbolic measures, however that result does not apply  to measures that have a zero Lyapunov exponent, which is the case for instance, of surfaces diffeomorphisms with zero entropy and in particular the ones in the boundary of chaos (see \cite{LM}). An strong application of that result is obtained in ~\cite{CPT} where the dynamics of  strongly dissipative diffeomorphisms of the disk with zero entropy is studied. 
In short, in the last theorem, we conclude that for strongly dissipative diffeomorphisms of the disk, the closure of all periodic points contains
the closure of the union of the supports of all invariant probability measures~\footnote{It is worth to mention that based on the above stated result, for  strongly dissipative diffeomorphisms of the disk with zero entropy, in \cite{CPT} is proved that the periodic points are dense in the non-wandering set.}. To conclude a general closing lemma, it would be needed to prove that generically the recurrent set is contained in that set.

 \bigskip
 
Let us consider a boundaryless surface $S$ and a $C^{r}$-diffeomorphism
$f\colon S\to f(S)\subset S$, where $r>1$.
If $f$ is \emph{dissipative}, i.e. if $|\det(Df(x))|<1$ for any $x\in S$,
then any $f$-invariant ergodic probability measure $\mu$
which is not supported on a hyperbolic sink
has one negative Lyapunov exponent
and another one which is non-negative.
In particular
for $\mu$-almost every point $x$, there exists a well-defined one-dimensional
stable manifold $W^s(x)$.
{\color{black} We denote by $W^s_{S}(x)$ the connected component of $W^s(x)$ which contains $x$.}

We introduce a class of surface diffeomorphisms which strengthen the notion of dissipation:

\begin{defi}\label{SD defi}
A $C^r$-diffeomorphism $f\colon S\to f(S)\subset S$ is \emph{strongly dissipative} if
\begin{itemize}
\item[--] $f(S)$ is contained in a compact subset of $S$,
\item[--] $f$ is dissipative,
\item[--] for any ergodic measure $\mu$ which is not supported on a hyperbolic sink, and for
$\mu$-almost every point $x$,
{\color{black} each of the two connected components of $W^s_S(x)\setminus \{x\}$ meets $S\setminus f(S)$.}
\end{itemize}
{\color{black} \rm (When $S$ is the disc $\DD$, the last condition
says that $W^s_\DD(x)$ separates the disc, see figure~\ref{f.strong-dissipation}.)}
\end{defi}

\begin{figure}
\begin{center}
\includegraphics[scale=0.5]{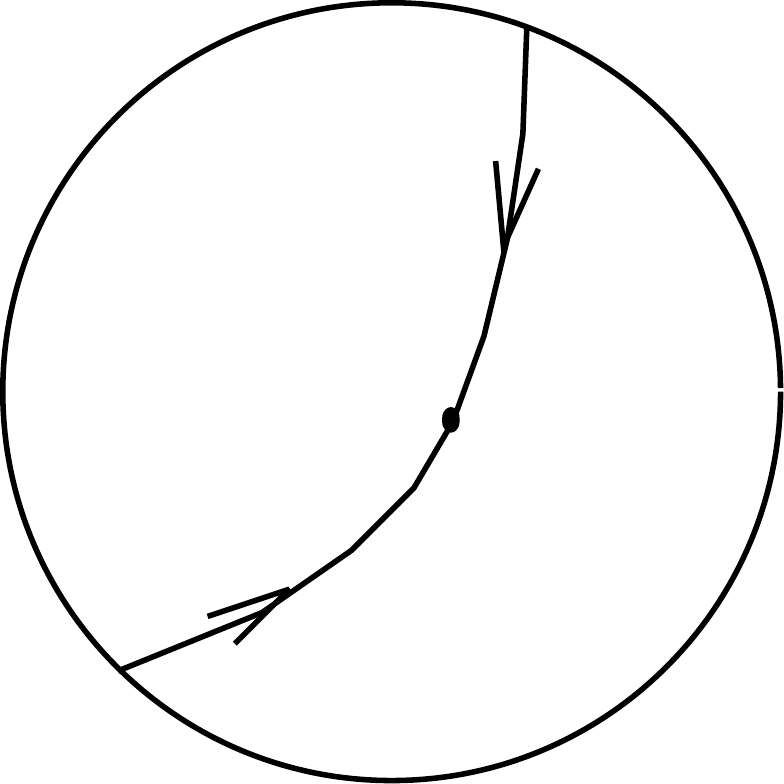}
\end{center}
\begin{picture}(0,0)
\put(217,76){$\tiny x$}
\put(237,95){$\tiny W^s_\DD(x)$}
\put(150,70){$\DD$}
\end{picture}
\caption{A separating stable manifold in the disc.
\label{f.strong-dissipation}}
\end{figure}

We denote $\diffdiss^r(S)$, $r>1$, the set of strongly dissipative $C^r$-diffeomorphisms.
\medskip

As it is shown at the end of  section~\ref{t.1D proof},
there are dissipative diffeomorphisms (meaning that the first two items in definition \ref{SD defi} are satisfied) 
which are not strongly dissipative; however, those examples are not Kupka-Smale and in particular observe that any dissipative Axiom A exhibiting  transversality is strongly dissipative. So it is natural to ask the following:

\begin{prob}
Is any Kupka-Smale dissipative diffeomorphism also strongly dissipative?
\end{prob}

When the Jacobian is {\color{black} small enough}, then the strong dissipation is an $C^1$-open property:

\begin{teo}\label{p.strong}
{\color{black} Let  $f\in \diffdiss^{2}(S)$, satisfying\footnote{\color{black} In the following we will also denote $\|Df\|:=\sup_x \|Df(x)\|$ and $|\det Df|:=\sup_x |\det Df(x)|$.
Hence condition~\eqref{e.dissipation2} can be written as $|\det Df|\leq \|Df^{-1}\|^{-9/10}$.}
for $x,y\in S$ and any unit vector $u\in T_yS$,
\begin{equation}\label{e.dissipation2}
|\det Df(x)|\leq  \|Df(y).u\|^{9/10}.
\end{equation}
Then, inside any $C^2$-bounded set of $C^2$-diffeomorphisms,
any diffeomorphism $g$ that is $C^1$-close to $f$ is strongly dissipative.}
\end{teo}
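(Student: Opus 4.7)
The plan is to verify the three conditions of Definition~\ref{SD defi} in turn for every $g$ in a suitable $C^1$-neighborhood of $f$ inside the prescribed $C^2$-bounded set. Conditions~(1) and~(2) are $C^1$-open: $g(S)\Subset S$ is $C^0$-open, while uniform dissipation $\sup_S|\det Dg|<1$ is $C^1$-open on the compact set $\overline{f(S)}$. By $C^1$-continuity of $\|D\cdot\|$ and $|\det D\cdot|$, after possibly shrinking the neighborhood we may assume that a slightly weaker form of~\eqref{e.dissipation2} persists: $|\det Dg(x)|\leq \|Dg(y).u\|^{\theta}$ for all $x,y\in S$, all unit $u\in T_yS$, and some fixed $\theta\in(0,9/10)$; for concreteness take $\theta=4/5$.

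For condition~(3), fix an ergodic $g$-invariant probability measure $\mu$ not supported on a hyperbolic sink. By dissipation and the non-sink assumption its Lyapunov exponents satisfy $\chi^{-}(\mu)<0\leq \chi^{+}(\mu)$. Specializing the persistent pointwise ratio inequality to $y=x$ with $u=e^s(x)$ the Oseledets stable direction, and integrating against $\mu$,
\[
\chi^{+}(\mu)+\chi^{-}(\mu)=\int \log|\det Dg|\,d\mu \leq \theta\int \log\|Dg\cdot e^s\|\,d\mu = \theta\,\chi^{-}(\mu),
\]
so $\chi^{+}(\mu)\leq \tfrac{1-\theta}{\theta}\,|\chi^{-}(\mu)|$: the stable exponent strongly dominates, uniformly in $g$. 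The $C^2$-bound provides uniform $C^{1+\mathrm{Lip}}$-estimates, so Pesin theory applies: for $\mu$-a.e.\ $x$ there is an embedded local stable manifold $W^s_{\mathrm{loc}}(x)$ tangent to $e^s(x)$, and one finds a Pesin block $K$ of positive $\mu$-measure on which this local stable manifold has length at least some $\delta_0>0$, with $\delta_0$ and $\mu(K)$ depending only on the Lyapunov-ratio bound and the $C^2$-bound, hence uniform in $g$.

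It remains to check that both components of $W^s_S(x)\setminus\{x\}$ meet $S\setminus g(S)$ for $\mu$-a.e.\ $x$. I would argue by contradiction. If some component of $W^s_S(x)\setminus\{x\}$ were contained in $g(S)$, iterating by $g^{-1}$ would be legitimate on it. By Birkhoff's ergodic theorem the orbit of $\mu$-a.e.\ $x$ visits $K$ with positive frequency, so the preimage $g^{-n}\bigl(W^s_{\mathrm{loc}}(g^n(x))\bigr)$, a subarc of $W^s_S(x)$, has length at least $\delta_0\,e^{n|\chi^{-}(\mu)|(1-o(1))}$. This would produce injectively immersed stable arcs of arbitrary length entirely contained in the compact set $\overline{g(S)}$.

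The main obstacle is turning this ``unbounded length in a compact set'' conclusion into a genuine contradiction. The cleanest route I would take is semicontinuity: for $g$ sufficiently $C^1$-close to $f$ within the given $C^2$-bounded set, the Pesin block $K$ and the associated family of local stable manifolds of $g$ vary $C^1$-continuously (on bounded subarcs) with $g$. A failure of separation for $g$ would then pass to the limit $g\to f$ and yield a failure of separation for $f$, contradicting the hypothesis $f\in\diffdiss^{2}(S)$. The $C^2$-bound is precisely what is needed to preserve the H\"older regularity of Pesin charts under perturbation.
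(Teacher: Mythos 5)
Your overall strategy --- openness of the first two conditions, exponent domination extracted from \eqref{e.dissipation2}, and a semicontinuity argument passing a failure of separation from $g$ back to $f$ --- is the same as the paper's. But there is a genuine gap at the step you state as ``one finds a Pesin block $K$ of positive $\mu$-measure on which this local stable manifold has length at least some $\delta_0>0$, with $\delta_0$ and $\mu(K)$ depending only on the Lyapunov-ratio bound and the $C^2$-bound, hence uniform in $g$.'' Standard Pesin theory does not give this: the measure of a Pesin block and the size of the local stable leaves on it come from Oseledets--Egorov--Lusin arguments and depend on the measure $\mu$ itself, not only on its exponents; and your inequality $\chi^{+}+\chi^{-}\leq\theta\,\chi^{-}$ is a statement about averages, which by itself controls nothing pointwise along orbits. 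Since condition (3) of Definition~\ref{SD defi} must be verified for \emph{every} ergodic measure of \emph{every} nearby $g$ simultaneously, the entire argument hinges on making these constants uniform, and that is precisely what the proposal asserts without establishing.

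The paper fills this gap with two specific devices. First, Pliss' lemma (Lemma~\ref{pliss}, used in Proposition~\ref{p1}) converts the average inequality, together with the pointwise lower bounds $\|Dg^n(x)u\|\geq m^n$ and $\|Dg^n(x)u\|^2/|\det Dg^n(x)|\geq (m^2/D)^n$, into a \emph{compact} set $A_g$ of points satisfying the pointwise condition~\eqref{e.stable} for all $n\geq 0$ with explicit constants, and of $\mu$-measure larger than $1/6$ for every ergodic $\mu$ not carried by a sink; the exponent $9/10$ in~\eqref{e.dissipation2} is exactly what makes the required pinching $\tilde\sigma\tilde\rho/(\sigma\rho)>\sigma$ hold. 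Second, a quantitative stable manifold theorem (Theorem~\ref{t.stable}) shows that under~\eqref{e.stable} the stable manifolds have uniform size and vary continuously in $(x,g)$ for $g$ ranging in a $C^2$-bounded set equipped with the $C^1$-topology; this is where the $C^2$-bound enters, through the nonlinear error in the rescaled charts. With the compact sets $A_{g_n}$ of uniformly positive measure in hand, the limit argument you sketch does close (one must also rule out that the limit measure charges a sink, which the paper does). Your ``unbounded length in a compact set'' detour is, as you yourself suspected, not a contradiction on its own --- stable manifolds of infinite length can perfectly well be contained in $\overline{g(S)}$ --- so everything rests on the uniformity you asserted but did not prove.
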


This result is based on a kind of continuity of Pesin's blocks and uniformity of stable manifolds,
which will be stated and proved in section~\ref{ss.stable}.
It uses that for non periodic ergodic measures, the negative Lyapunov exponent
is close to (the logarithm of) the minimum contraction of $f$. This property is reminiscent of
the works on the non-uniform hyperbolicity of some H\'enon maps~\cite{BC}.
\medskip

We will show that the strong dissipation is satisfied by dynamics
close to one-dimensional endomorphisms.
{\color{black} That result is stated in its full generality in section~\ref{t.1D proof}, Theorem~\ref{t.1D}.
In the case of polynomial automorphisms, one can also apply tools from complex analysis and in particular Wiman theorem,
as it was explained to us by Dujardin and Lyubich. This has already be used in~\cite{LP,DL}
and a small variant of the arguments there gives the following stronger form of Theorem~\ref{t.1D} for the H\'enon map:

\newcounter{theothird}
\setcounter{theothird}{\value{teo}}
\begin{teo}[Version for the H\'enon map] \label{t.1D-bis} For any $a\in (1,2)$ and for $b\in (-\frac{1} 4,\frac 1 4)\setminus \{0\}$, the H\'enon map
\begin{equation}\label{e.henon} H_{a,b}\colon \;(x,y)\mapsto (1-ax^2+y,-bx)
\end{equation}
is strongly dissipative on the surface $S=\{(x,y):\; |x|<1/2+1/a, \; |y|<1/2-a/4\}$.
\end{teo}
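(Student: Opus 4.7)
I would verify the three conditions of Definition~\ref{SD defi}: compact trapping, dissipation, and the separating property of stable manifolds. Dissipation and trapping reduce to elementary estimates on $H_{a,b}$ and $\bar S$, while the separation property is obtained by extending $H_{a,b}$ to $\CC^2$ and applying the complex-dynamical technique of \cite{LP,DL} based on Wiman's theorem.

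First, dissipation is immediate since $\det DH_{a,b}(x,y)=b$ and $|b|<1/4<1$. For the trapping $H_{a,b}(\bar S)\subset S$, I would verify by direct computation, using $a\in(1,2)$ and $|b|<1/4$, that for all $(x,y)\in\bar S$ one has $|1-ax^2+y|<1/2+1/a$ and $|bx|<1/2-a/4$; the thresholds defining $S$ are chosen precisely so these bounds are tight.

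For the separating property, extend $H_{a,b}$ to a polynomial automorphism of $\CC^2$ of algebraic degree $d=2$. Let $\mu$ be any ergodic measure on $S$ not supported on a hyperbolic sink; for $\mu$-almost every Pesin regular $x$, the Lyapunov exponents satisfy $\lambda^+(x)+\lambda^-(x)=\log|b|$ with $\lambda^-(x)<0\leq\lambda^+(x)$, hence $|\lambda^-(x)|\geq\log(1/|b|)$. By Pesin theory in $\CC^2$, the complex stable manifold $W^s_\CC(x)$ is uniformized by an entire curve $\phi\colon\CC\to\CC^2$ with $\phi(0)=x$ and $\phi(\RR)$ equal to the real stable manifold through $x$. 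The key estimate, following the analysis of stable-manifold parametrizations in \cite{LP,DL}, is that $\phi$ has finite order bounded by $\log d/|\lambda^-(x)|$; combined with $|\lambda^-(x)|\geq\log(1/|b|)>\log 4$, this gives
\[
\rho(\phi)\;\leq\;\frac{\log 2}{\log(1/|b|)}\;<\;\frac{1}{2}.
\]

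Since $\phi$ has order strictly less than $1/2$, Wiman's theorem yields $\liminf_{r\to\infty}\min_{|z|=r}|\phi(z)|=\infty$, and a Phragm\'en--Lindel\"of argument then shows that $\phi|_{\RR}\colon\RR\to\RR^2$ is proper. The real stable manifold $\phi(\RR)$ is therefore a closed, unbounded embedded curve through $x$; its intersection with the bounded rectangle $S$ contains an arc through $x$ with both endpoints on $\partial S$. The trapping $H_{a,b}(\bar S)\subset S$ yields $\partial S\subset S\setminus H_{a,b}(S)$, so each component of $W^s_S(x)\setminus\{x\}$ meets $S\setminus H_{a,b}(S)$, which is the third condition of Definition~\ref{SD defi}. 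The main obstacle is establishing the order bound $\rho(\phi)\leq\log d/|\lambda^-(x)|$ for stable-manifold parametrizations along generic Pesin regular orbits (not just saddle fixed points, where it follows from Koenigs-type functional equations): this is the ``variant'' of the arguments of \cite{LP,DL} referenced in the statement, and it is precisely here that the threshold $|b|<1/4$ enters, via the requirement $\log(1/|b|)>\log d^2=\log 4$.
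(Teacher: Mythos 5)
Your strategy for the separation property --- complexify, bound the order of the stable parametrization by $\log d/|\lambda^-|$, use $|\lambda^-(\mu)|\geq \log(1/|b|)>\log 4$ to get order $<1/2$, and invoke Wiman --- is exactly the route the paper takes, and you correctly locate both where the threshold $|b|<1/4=d^{-2}$ enters and where the real work lies. The paper resolves your acknowledged ``main obstacle'' by working not with the order of the curve $\phi_x$ itself but with the scalar subharmonic function $g^-_x=G^-\circ\phi_x$, where $G^-$ is the backward Green function: its order equals $\log d/|\lambda^-(\mu)|$ for $\mu$-a.e.\ point of any ergodic measure by \cite[Section 4]{BS}. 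It then applies the subharmonic form of Wiman's theorem (\cite[Theorem 35]{LP}) to $\max(g^-_x,T)-T$ to conclude that the component through $0$ of each sublevel set $\{g^-_x\leq T\}$ is bounded, and combines this with the properness (supplied by Pesin theory) of the lift of the real branches to the uniformizing plane. Your endgame needs two repairs to match this: Wiman gives $\limsup_{r\to\infty}\min_{|z|=r}$, not $\liminf$ (any zero of a coordinate function kills the $\liminf$), and it is a theorem about scalar entire or subharmonic functions, so it must be applied to something like $g^-_x$ or $\log\|\phi_x\|$ rather than to the $\CC^2$-valued curve $\phi_x$. Once that is done, unboundedness of $\phi_x(\pm r_n)$ along the Wiman radii already shows both real branches are unbounded, and no Phragm\'en--Lindel\"of step is needed.

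There is also one concrete false claim: the trapping inequality $|{-bx}|<1/2-a/4$ for all $(x,y)\in\bar S$ does not hold. For example $a=3/2$, $b=0.24$, $x=1.1$ gives $|bx|=0.264>1/8=1/2-a/4$; in general the second coordinate stays in the strip only when $|b|\leq a(2-a)/(2(a+2))$, which is below $1/6$ and tends to $0$ as $a\to 2$. This defect is inherited from the statement itself (the paper's own proof in Section 4.2 establishes only the separation property and never verifies the first condition of Definition~\ref{SD defi} for this $S$), but you should not assert that the ``thresholds are chosen precisely so these bounds are tight'' when the computation does not close; either $S$ must be modified or one must restrict attention to a genuinely forward-invariant subregion before applying the rest of the argument.
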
}

The following shows that, conversely to the previous theorem, strongly dissipative diffeomorphisms (in particular H\'enon maps with $|b|$ small)
have a one-dimensional structure. This somewhat generalizes Williams construction~\cite{W} of branched
manifolds associated to hyperbolic surface attractors: in  that case, the reduction obtained through the quotient of the stable manifolds gives an endomorphism on a branched one-dimensional manifold. For strongly dissipative theorem, even lacking a uniformly stable foliation, using the strong dissipation, a reduced one-dimensional dynamics is obtained as a continuous non-invertible map acting on a real tree.

We recall that a real tree is a path connected metric space such that for any two points $a,b$ there exists a unique
subset homeomorphic to $[0,1]$ whose endpoints are $a$ and $b$.

\begin{teo}\label{reduction}
Let $f\in \diffdiss^{r}(\DD)$, $r\!>\!1$, be a strongly dissipative diffeomorphism of the disc.
Then there exists a semi-conjugacy
$\pi\colon (\mathbb{D},f)\to (X,h)$
to a continuous map $h$ on a compact real tree which induces an injective map on
the set of non-atomic ergodic measures $\mu$ of $f$.
{\color{black} Moreover the entropies of $\mu$ and $\pi_*(\mu)$ are the same.}
\end{teo}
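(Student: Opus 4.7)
The plan is to quotient $\DD$ by the equivalence relation generated by the separating stable manifolds of Definition~\ref{SD defi} and to verify that the quotient is a real tree, that $f$ descends to a continuous map, and that the reduction preserves enough information about the non-atomic ergodic measures.

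Let $\Lambda\subset\DD$ denote the union of the Pesin regular sets of all ergodic $f$-invariant probability measures that are not supported on a hyperbolic sink. For $x\in\Lambda$ the arc $W^s_\DD(x)$ is closed and separates $\DD$, and distinct such arcs are pairwise disjoint, because $y\in W^s_\DD(x)$ defines an equivalence relation on $\Lambda$. Declare $x\approx y$ when $x,y\in\Lambda$ and $y\in W^s_\DD(x)$, and let $\sim$ be the smallest closed equivalence relation on $\DD$ containing $\approx$. Set $X:=\DD/{\sim}$ with the quotient topology and let $\pi\colon\DD\to X$ be the projection. Then $X$ is a compact Hausdorff continuum; the non-trivial classes of $\sim$ are continua that arise as limits of the arcs $W^s_\DD(x)$, and they still separate $\DD$ and remain pairwise disjoint. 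Hence $X$ is the standard dendritic quotient of a disc by a lamination by separating closed arcs, and in particular a compact real tree: any two points of $X$ are joined by the projection of a path in $\DD$ between representatives, and uniqueness of the arc follows from the separation property.

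The map $f$ preserves $\approx$ since it sends stable manifolds into stable manifolds, and its continuity transfers this to $\sim$; thus $f$ descends to a continuous map $h\colon X\to X$ with $\pi\circ f=h\circ\pi$. For the entropy statement, fix an ergodic $\mu$ and apply the Abramov--Rohlin formula:
\[
h_\mu(f)=h_{\pi_*\mu}(h)+h_\mu\bigl(f\mid \pi^{-1}(\mathcal{B}_X)\bigr).
\]
For $\mu$-a.e.\ $x$ the fiber $\pi^{-1}(\pi(x))$ lies in a single equivalence class contained in the closure of $W^s_\DD(x)$, on which $f$ acts as a contraction of a one-dimensional set and hence has zero entropy. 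The conditional entropy therefore vanishes and $h_\mu(f)=h_{\pi_*\mu}(h)$.

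For injectivity on non-atomic ergodic measures, let $\mu$ be such a measure. By ergodicity, the function $x\mapsto\mu(W^s_\DD(x))$ is $\mu$-a.e.\ equal to some constant; if the constant were positive, $\mu$ would be supported on finitely many stable leaves, which $f$ permutes and on which a suitable power of $f$ is a contraction --- impossible, since a contraction admits only Dirac invariant measures. So $\mu(W^s_\DD(x))=0$ for $\mu$-a.e.\ $x$, and a similar argument applies to the closed saturation $\pi^{-1}(\pi(x))$; the disintegration of $\mu$ over $\pi_*\mu$ is then carried by Dirac masses, so $\mu$ is determined by $\pi_*\mu$, giving injectivity. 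The principal obstacle is verifying the real-tree property of $X$: the closure defining $\sim$ can a priori enlarge stable-leaf classes via accumulations of other stable manifolds, and one needs strong dissipation together with the planar topology of $\DD$ to show that the enlarged classes remain pairwise disjoint separating continua, so that the quotient is a genuine real tree and not merely a dendroid.
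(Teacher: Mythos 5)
Your overall strategy---quotienting the disc along separating stable manifolds---is the same as the paper's, but your implementation leaves the central step unproved, as you yourself concede in your final sentence. Defining $X$ as $\DD/\!\sim$, where $\sim$ is the smallest closed equivalence relation containing ``$y\in W^s_\DD(x)$'', is dangerous: the stable manifolds of Pesin-regular points form only a \emph{measurable} family of pairwise disjoint arcs, not a lamination, so the closure operation can a priori merge classes transitively and collapse large regions; showing that the saturated classes remain pairwise disjoint separating continua (so that the quotient is Hausdorff and a real tree rather than some dendroid) is precisely the content of the theorem, not a routine verification, and invoking ``the standard dendritic quotient of a disc by a lamination'' begs the question. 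The paper circumvents this by first extracting a \emph{countable} family $\Gamma$ of stable arcs, closed under taking components of $f^{-1}$, with each arc accumulated on both sides by arcs of $\Gamma$ and with almost every stable manifold of every aperiodic measure approximated by arcs of $\Gamma$; the space $X$ is then built not as a topological quotient of $\DD$ but as the set of minimal nested sequences of complementary surfaces bounded by finitely many arcs of $\Gamma$, and compactness, metrizability (via Urysohn) and the real-tree property are obtained by ordering, for any two points, the countably many separating arcs between them. You would need either to prove the disjointness and separation of the saturated classes of $\sim$, or to switch to such a construction.

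A second genuine gap is the injectivity argument. From $\mu(\pi^{-1}(\pi(x)))=0$ for a.e.\ $x$ you conclude that the disintegration of $\mu$ over $\pi_*\mu$ is carried by Dirac masses; this implication is false in general (Lebesgue measure on the square projected to its first coordinate has null fibers and non-atomic conditionals), and even Dirac conditionals would not show that two distinct ergodic measures have distinct pushforwards, since their generic points could occupy different positions in the same fibers. The argument that actually works, and the one the paper uses, is that each fiber is contained in a stable set: two points in the same fiber are forward asymptotic, hence have identical Birkhoff averages, so regular points of distinct aperiodic ergodic measures must lie in distinct fibers and the pushforwards are mutually singular. Your entropy computation via the Ledrappier--Walters formula and the vanishing of the fiber entropy is essentially the paper's argument and is fine, modulo the unproved description of the fibers on which it relies.
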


As an application of the notion of strong dissipation,
we show that the periodic points approximate the support of any invariant measure,
generalizing the result for one-dimensional endomorphisms. Our argument also provides a simpler proof of the result presented in \cite{LSY} (see the section~\ref{sec measures}) and also proving that periodic points are dense in the non-wandering set of endomorphisms of compact real tree as defined above.

\begin{teo}\label{t.measure revisited}
For $f\in \diffdiss^{r}(\DD)$, $r>1$, the support of any $f$-invariant probability $\mu$ is contained in the closure of the periodic points.
In particular if $f$ preserves a non-atomic ergodic measure, then there are infinitely many periodic points with unbounded period.
\end{teo}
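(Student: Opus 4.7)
My plan is to combine the one-dimensional reduction of Theorem~\ref{reduction} with a direct two-dimensional closing construction in $\DD$ built on the separation property of strong dissipation. By the ergodic decomposition it suffices to prove $\supp(\mu)\subset\overline{\mathrm{Per}(f)}$ for $\mu$ ergodic; for atomic ergodic $\mu$ this is immediate, so I assume $\mu$ non-atomic. The second assertion then follows: if all periodic points had period at most $K$, then $\supp(\mu)$ would lie in $\Fix(f^{K!})$, $f$ would act there as a finite-order permutation, and ergodicity would force $\mu$ to be supported on a single orbit, contradicting non-atomicity.

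Push $\mu$ forward to $\bar\mu:=\pi_\ast\mu$, which by Theorem~\ref{reduction} is a non-atomic ergodic $h$-invariant probability on the compact real tree $X$. The main one-dimensional ingredient is a closing lemma for continuous endomorphisms of compact real trees: periodic points are dense in the support of every $h$-invariant probability. I would prove this by adapting~\cite{LSY}, replacing the order of the interval by the partial order on $X$ coming from the unique arc between two points: for a $\bar\mu$-generic $\bar x$ with a close Poincar\'e return $h^N(\bar x)$, an intermediate-value-type argument on the arc joining them produces a periodic point of $h^N$ nearby.

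A periodic point $q\in X$ of period $N$ then lifts to $\DD$: the fibre $\pi^{-1}(q)$ is a non-empty compact non-separating $f^N$-invariant continuum in $\DD$ (as $\pi$ collapses connected stable pieces), on which a Brouwer- or Cartwright--Littlewood-type fixed-point theorem produces a periodic point $p$ of $f$ with $\pi(p)=q$.

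The main obstacle is the passage from density of periodic points in $\supp(\bar\mu)\subset X$ to density in $\supp(\mu)\subset\DD$: a lift of a periodic point $q_k\to\pi(x)$ a priori only lies in a fibre accumulating on $\pi^{-1}(\pi(x))\supseteq W^s_\DD(x)$, hence possibly far from $x$ along the leaf. To remedy this I would complement the lift by a direct two-dimensional construction: for a $\mu$-generic $x$ and a close Poincar\'e return $y:=f^n(x)$, the stable manifolds $W^s_\DD(x)$ and $W^s_\DD(y)$ both separate $\DD$ by Definition~\ref{SD defi} and bound a thin topological rectangle around $x$; combining $f(\DD)\Subset\DD$ with the partial order defined by these separating leaves, some iterate of $f$ maps this rectangle into itself, and Brouwer provides a periodic point of $f$ inside it, arbitrarily close to $x$. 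Specialised to one-dimensional phase spaces, this construction also recovers both the closing lemma of~\cite{LSY} and the tree closing lemma used above.
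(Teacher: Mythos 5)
Your closing strategy in the disc is on the right track --- the paper's proof is exactly the ``direct two-dimensional construction'' you sketch at the end: a thin rectangle $R$ bounded by pieces of the separating stable manifolds $W^s_\DD(x_0)$ and $W^s_\DD(x_1)$ of a recurrent point $x_0$ in a Pesin block and of a close return $x_1=f^n(x_0)$ (continuity of the local stable manifolds on the block, obtained via Lusin's theorem, is what guarantees the two leaves are $C^1$-close and actually bound a thin rectangle), inside an attracting disc $D$ obtained by smoothing the boundary of a forward image of $S$ so that it is transverse to the leaves. But the step you state as ``some iterate of $f$ maps this rectangle into itself, and Brouwer provides a periodic point'' is a genuine gap: no iterate of $f$ need map $R$ into $R$, and no partial-order argument produces one. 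What the paper does instead --- and this is the whole point of its one-dimensional warm-up, Proposition~\ref{density one dim} --- is to write $D\setminus R$ as two components $D^-$ and $D^+$ with $x_0\in D^-$, $x_1\in D^+$, set $g=f^n$, choose $k$ to be the \emph{first} positive integer with $g^k(x_0)\in D^+$ and $g^{k+1}(x_0)\notin D^+$, and compose with a retraction $h\colon D\to R$ collapsing $D^\pm$ onto the stable boundary arcs $D^\pm\cap R$. Then $h\circ g^k$ maps $R$ into itself and has a Brouwer fixed point $p$; the crucial extra argument is that $g^k(D^-\cap R)\subset D^+$ and $g^k(D^+\cap R)\cap D^+=\emptyset$ (using that these boundary arcs are stable arcs, so their images are either contained in or disjoint from the other stable leaf), which forces $p\notin R\cap(D^+\cup D^-)$, hence $h^{-1}(p)=\{p\}$ and $g^k(p)=p$. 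Without the retraction and this final localization of the fixed point, your construction does not close.

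Your primary route through Theorem~\ref{reduction} is not used in the paper and, as you yourself observe, cannot stand alone: besides the localization problem you flag, the fibres $\pi^{-1}(q)$ are in general neither compact in $\DD$ nor non-separating (they contain separating stable leaves), so a Cartwright--Littlewood argument does not apply as stated. Your reduction to ergodic non-atomic measures and the derivation of unbounded periods are fine. I recommend dropping the tree detour and fleshing out the rectangle argument with the retraction trick above; note that the same trick, run on the interval, is exactly how the paper reproves the closing lemma of~\cite{LSY}.
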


As a consequence we obtain:

\begin{cor}\label{c.henon} \color{black}
For any $a\in (1,2)$ and $b\in (-\frac{1} 4,\frac 1 4)\setminus \{0\}$,
the support of any probability measure $\mu$
which is invariant by the H\'enon map $H_{a,b}$
is contained in the closure of the periodic points.
\end{cor}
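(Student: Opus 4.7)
The plan is to combine Theorems~\ref{t.1D-bis} and~\ref{t.measure revisited}. The surface $S=\{(x,y):|x|<1/2+1/a,\;|y|<1/2-a/4\}$ appearing in Theorem~\ref{t.1D-bis} is an open rectangle, hence diffeomorphic to the open disc $\DD$. Via such a diffeomorphism, the restriction $H_{a,b}|_S$ is conjugate to an element of $\diffdiss^r(\DD)$ for every $r>1$ by Theorem~\ref{t.1D-bis}.

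Before Theorem~\ref{t.measure revisited} can be invoked, one must verify that every Borel probability measure $\mu$ on $\RR^2$ invariant by $H_{a,b}$ is supported inside $S$. The first bullet of Definition~\ref{SD defi} gives $\overline{H_{a,b}(S)}\subset S$, so $\Lambda:=\bigcap_{n\geq 0}H_{a,b}^n(S)$ is a compact $H_{a,b}$-invariant subset of $S$. A standard short escape computation for the H\'enon family---showing that for $a\in(1,2)$ and $|b|<1/4$ every point outside $S$ eventually leaves every bounded region under iteration---implies that no exterior point is recurrent; by Poincar\'e recurrence, $\supp(\mu)\subset\Lambda\subset S$.

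With $\supp(\mu)$ lying inside the disc-like region on which $H_{a,b}$ is strongly dissipative, Theorem~\ref{t.measure revisited} applies and yields $\supp(\mu)\subset\overline{\mathrm{Per}(H_{a,b}|_S)}\subset\overline{\mathrm{Per}(H_{a,b})}$, which is the desired conclusion.

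The only step that does any real work is the confinement of the recurrent set to $S$ in the second paragraph; this is not a consequence of abstract strong dissipation but a parameter-specific trapping fact about the H\'enon family, falling out of the same filtration-type calculations that underpin Theorem~\ref{t.1D-bis}. Everything else is a mechanical composition of the two results above.
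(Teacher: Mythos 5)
Your overall strategy is the one the paper uses: the corollary is Theorem~\ref{t.1D-bis} plus Theorem~\ref{t.measure revisited} plus a trapping analysis confining invariant measures to the rectangle $S$. However, the dynamical claim that carries your second paragraph is false as stated. For $a\in(1,2)$ the map $H_{a,b}$ has a fixed point $p$ with first coordinate $\tfrac{-(1+b)-\sqrt{(1+b)^2+4a}}{2a}$, which for small $|b|$ lies in $(-3,-1/2-1/a)$, i.e.\ \emph{outside} $S$ (e.g.\ for $a=1$, $b\approx 0$ it is near $-1.618$ while $S$ requires $|x|<1.5$). This $p$ is a saddle, so it neither escapes to infinity nor enters $S$, and the points of $W^s(p)$ do not leave every bounded region either. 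Consequently your assertions that ``every point outside $S$ eventually leaves every bounded region'' and that ``every invariant probability measure is supported inside $S$'' are both wrong: $\delta_p$ is an invariant measure with $\supp(\delta_p)\not\subset S$.

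The corollary of course survives, because $p$ is periodic, but your proof needs the correct trichotomy rather than a dichotomy. The paper establishes exactly this: using the forward-invariant escape cone $C=\{|x|>|y|,\ |x|>3\}$ and the behaviour of the one-dimensional map on $(-3,-1/2-1/a)$, every forward orbit either escapes to infinity, or converges to $p$, or eventually enters the trapping region $S$. By Poincar\'e recurrence the ergodic decomposition of any invariant $\mu$ then splits into a multiple of $\delta_p$ (whose support is trivially in the closure of the periodic points) and measures supported in $S$, to which Theorem~\ref{t.measure revisited} applies via Theorem~\ref{t.1D-bis}. You should replace your ``no exterior point is recurrent'' step by this case analysis; as written, the step you yourself identify as ``the only step that does any real work'' proves a false statement.
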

{\color{black} Dujardin has recently obtained (with a completely different approach) a similar statement for the complex H\'enon automorphisms,
once the Jacobian $b$ satisfies $0<|b|<1$: his result provides a dense set of periodic points in the union of the support of the ergodic probability measures
in $\CC^2$ which are not supported on a periodic circle. Note that in the case of real H\'enon maps, it does not conclude about the existence of periodic points in the real plane
and does not imply the Corollary~\ref{c.henon} above.}

\paragraph{Acknowledgements.}
This work started during the preparation of~\cite{CKKP}, where a dichotomy for strong dissipative of the annulus is proved,   and we are indebted to
Alejandro Kocsard and Andres Koropecki for the discussions we exchanged on this topic.
{\color{black} We also thank Romain Dujardin and Mikhail Lyubich for their explanations about Wiman theorem:
the stronger version of Theorem~\ref{t.1D} for the H\'enon family stated in the introduction is due to them.
Finally, we thank the referee for his comments which improved the first version of this text.}


\section{Stable manifolds}\label{ss.stable}
The proof of theorems \ref{p.strong} and \ref{t.1D} requires a strong version of the stable manifold theorem
for non-uniformly hyperbolic orbits.
{\color{black} As it is explained in the remarks below, it provided a uniformity of Pesin's stable manifold theorem
with respect to the measure. The assumptions not only require a contraction and a domination with the transverse direction,
but also a pinching.}

\begin{teo}[Stable manifold at non-uniformly hyperbolic points]\label{t.stable}
{\color{black} Consider a compact set $\Lambda\!\subset\! S$, two neighborhoods $U,V$,
a $C^2$-bounded set $\mathcal{D}$ of $C^2$-diffeomorphisms $f\colon U\to V$
and $\sigma,\tilde \sigma,\rho,\tilde \rho\in (0,1)$
such that $\frac{\tilde \sigma\tilde \rho}{\sigma\rho}>\sigma$.

Then, for any $f\in \mathcal D^2$, the points $x\in \cap_{n\geq 0} f^{-n}(\Lambda)$ having a direction $E\subset T_xS$
satisfying
\begin{equation}\label{e.stable}
\forall n\geq 0,\;\;\; {\tilde \sigma}^{n}\leq \|Df^n(x)_{|E}\|\leq  \sigma^{n},\;
\text{ and }\; {\tilde \rho}^n\leq \frac{\|Df^n(x)_{|E}\|^2}{|\det Df^n(x)|}\leq \rho^{n},
\end{equation}
have a one-dimensional stable manifold
varying continuously in the $C^1$-topology with the point $x$
and with the diffeomorphism $f$ in the space $\mathcal D$ endowed with the $C^1$-topology.}
\end{teo}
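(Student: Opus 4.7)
My plan is to construct the stable manifold via the Hadamard--Perron graph transform along the forward orbit $y_n := f^n(x)$, choosing all quantitative sizes to depend only on $\sigma,\tilde\sigma,\rho,\tilde\rho$ and the $C^2$-bound on $\mathcal{D}$, rather than on the individual orbit.

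First I would pass to adapted frames: set $E_n := Df^n(x)\cdot E$, pick a unit vector $v_n\in E_n$ and a unit vector $w_n\perp v_n$, and write $Df(y_n)$ in the basis $(v_n,w_n)$ as an upper triangular matrix with diagonal entries $\lambda_n := \|Df(y_n)|_{E_n}\|$ and $\mu_n := |\det Df(y_n)|/\lambda_n$. The hypothesis~\eqref{e.stable} then reads
\[
\tilde\sigma^n \leq \lambda_0\cdots\lambda_{n-1}\leq \sigma^n,\qquad \tilde\rho^n\leq \prod_{k<n}\lambda_k/\mu_k\leq \rho^n,
\]
which is a uniform $\rho$-domination of the complementary direction over $E_n$. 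Fix a stable cone $C^s_n\subset T_{y_n}S$ of small aperture $\alpha_0$ around $E_n$; the triangular form together with the domination guarantees that, for $\alpha_0$ small enough, $Df(y_n)^{-1}$ maps $C^s_{n+1}$ strictly into $C^s_n$.

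Next I would run the graph transform in uniform-size charts around each $y_n$. A stable disc at $y_n$ is the graph over the $v_n$-axis in the chart of a Lipschitz function defined on $[-r,r]$ with slopes in the cone; the transform sends a family $(\Gamma_{n+1})$ to the family whose $n$-th term is the component through $y_n$ of $f^{-1}(\Gamma_{n+1})$, truncated to radius $r$. The central quantitative task is to prove that, for a suitable $r = r(\sigma,\tilde\sigma,\rho,\tilde\rho,\mathcal{D})>0$, the truncation never occurs and the transform is a contraction on the corresponding space of Lipschitz graph families. This reduces to comparing the linear inverse rate $1/\lambda_n$ to a nonlinear error of order $\|D^2f\|\,r/\mu_n$, iterated along the orbit: the bounds in~\eqref{e.stable} combined with the strict inequality $\tilde\sigma\tilde\rho/(\sigma\rho)>\sigma$ are what give the uniform positive radius. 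The unique fixed point then provides an invariant family of $C^1$ discs, and $W^s_{\text{loc}}(x)$ is its term at $n=0$.

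Continuity in $(x,f)$ comes from the stability of the contraction-mapping argument: the coefficients of the graph transform ($v_n, w_n, \lambda_n, \mu_n$ and the nonlinear remainder in the charts) depend continuously on $(x,f)$ in the $C^1$ topology of $\mathcal{D}$, so the fixed point does too, in the $C^1$ topology of Lipschitz graphs. The main obstacle is the quantitative step in the previous paragraph, namely verifying that $\tilde\sigma\tilde\rho/(\sigma\rho)>\sigma$ is precisely the right threshold for the $C^2$ nonlinear error to be absorbed by the linear contraction along the whole orbit; this is what upgrades the ordinary Pesin stable manifolds (which come only in a measurable, non-uniformly sized family) to a family that is uniform and continuous in $(x,f)$.
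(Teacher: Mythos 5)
There is a genuine gap: your argument treats the orbit as if it were uniformly hyperbolic at every single step, whereas the hypothesis~\eqref{e.stable} only controls the \emph{products} $\lambda_0\cdots\lambda_{n-1}$ and $\prod_{k<n}\lambda_k/\mu_k$. The individual factors $\lambda_n$ and $\mu_n$ are unconstrained except by the $C^1$-bound: at a given step one may have $\lambda_n\geq 1$ (no contraction of $E_n$ at that step) or $\lambda_n/\mu_n\geq 1$ (no domination at that step). Consequently two of your key single-step claims fail as stated. First, for a \emph{fixed} aperture $\alpha_0$ the cone invariance ``$Df(y_n)^{-1}$ maps $C^s_{n+1}$ strictly into $C^s_n$'' is false at steps where the domination is violated. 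Second, the comparison of ``the linear inverse rate $1/\lambda_n$ to a nonlinear error of order $\|D^2f\|\,r/\mu_n$'' is a per-step estimate that gives nothing at steps where $1/\lambda_n\leq 1$. You acknowledge that the real work is ``iterated along the orbit,'' but that accumulation is precisely the content of the proof and it is not supplied; in particular the role of the threshold $\tilde\sigma\tilde\rho/(\sigma\rho)>\sigma$ is asserted rather than derived.

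The paper's proof resolves exactly this point by a Lyapunov rescaling: it conjugates the lifted maps $g_n$ by diagonal changes of coordinates $\Delta_n=\operatorname{Diag}(A_n,A_nB_n)$, with $A_n=\sum_{k\geq 0}\lambda_1^{-k}m_{n+k}/m_n$ and $B_n=\sum_{k=0}^n\lambda_2^{k-n}(M_k/M_n)/(m_k/m_n)$, which converts the product bounds of~\eqref{e.stable} into genuine step-by-step estimates (see~\eqref{e.inductionA}--\eqref{e.inductionB} and~\eqref{e.a}--\eqref{e.d}): after rescaling, every $H_n$ contracts a fixed horizontal cone and expands it under $H_n^{-1}$ by at least $\lambda_1^{-1}$. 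The price is that the norms $\|\Delta_n\|=A_nB_n$ may grow, so the neighborhoods where the rescaled nonlinearity is $\varepsilon$-small shrink like $r_n^\alpha\sim(\tilde\sigma\lambda_2/(\lambda_1\rho))^{n}$; the hypothesis $\tilde\sigma\tilde\rho/(\sigma\rho)>\sigma$ enters exactly here, via the choice~\eqref{e.condition}, to guarantee $r_n$ decays slower than $\lambda_1^{n/\alpha}$, i.e.\ slower than the rate at which the candidate stable curve is contracted, so the globally extended maps $\widehat h_n$ agree with $h_n$ along the curve. If you want to avoid rescaling, you must replace your per-step comparison by an explicit control of the accumulated nonlinear error against the accumulated contraction and domination; either way, this mechanism is the heart of the theorem and is missing from your proposal. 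The cone/graph-transform skeleton and the continuity argument at the end are otherwise consistent with the paper's strategy.
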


\begin{remas}
\begin{enumerate}
\item {\color{black} The statement is also valid for $C^{1+\alpha}$-diffeomorphisms $\alpha\in (0,1)$
if $\mathcal{D}$ is a $C^{1+\alpha}$-bounded set of $C^{1+\alpha}$-diffeomorphisms
and if the condition $\frac{\tilde \sigma\tilde \rho}{\sigma\rho}>\sigma^\alpha$ holds.}

\item The second part of condition~\eqref{e.stable} can be restated (see also~\cite{PR}):
$E$ repels exponentially for the action on the directions.
More precisely,
$Df$ induces an action on the unit tangent bundle; if $\gamma_x(E)$ denotes its derivative
at $(x,E)$ along the circle $T^1_xS$, then $|\gamma^n_x(E)|\geq \rho^{-n}$ for each $n\geq 0$.

\item This result implies the stable manifold theorem of Pesin theory
for surface diffeomorphism.
{\color{black} Indeed if $\mu$ is an ergodic measure having some Lyapunov exponents
$\lambda^-<\lambda^+$ with $\lambda^-<0$,
then for any $\varepsilon>0$, there exists $N\geq 1$
and a set with $\mu$-measure larger than $1-\varepsilon$ of points $x$
such that condition~\eqref{e.stable} holds for $f^N$
with $\tilde \sigma,\sigma=\exp(N(\lambda_-\pm \varepsilon))$
and $\tilde \rho,\rho=\exp(N(\lambda_--\lambda_+\pm\varepsilon))$.}
 
In particular this result also
gives a uniformity with respect to the measure.
\item This result is close to~\cite[section 5]{BC} where stable manifolds are obtained
by successive approximations.
\end{enumerate}
\end{remas}

\begin{proof}
{\color{black} As in the proof of Pesin's stable manifold, the idea is to consider a sequence of local charts
following the orbit of $x$. These charts are scaled so that the dynamics of $f$ behaves as a uniformly hyperbolic
diffeomorphism. On the other hand, their size has to be controlled in order to guarantee that the stable
manifold in the charts corresponds to a stable manifold for the initial diffeomorphism.}
\medskip

{\color{black} Since $\frac{\tilde \sigma\tilde \rho}{\sigma\rho}>\sigma$,
one can choose $\lambda_1\in (\sigma,1)$ and
$\lambda_2\in (0,\tilde \rho)$ satisfying
\begin{equation}\label{e.condition}
\frac{\tilde \sigma\lambda_2}{\lambda_1\rho}>\lambda_1.
\end{equation}

Let $C_0>0$ such that
\begin{equation}\label{e.bound-C_0}
C_0>\sum_{k\geq 0} (\sigma/\lambda_1)^k \text{ and }
C_0>\sum_{k\geq 0} (\lambda_2/\tilde \rho)^k.
\end{equation}}

The dynamics of $f$ in a neighborhood of the forward orbit of $x$ can be lifted
by the exponential map to
the tangent bundle as maps $g_n\colon T_{f^n(x)}S\to T_{f^{n+1}(x)}S$
defined on uniform neighborhoods of $0$. Let $F:=E^\perp$.
One considers the orthogonal decomposition
$T_{f^n(x)}S=E_n\oplus F_n$ such that $E_n:=Df^n(E)$ and $F_n=E_n^\perp$.
We then set
$$m_n=\|Df^n_{|E}(x)\| \text{ and }
M_n=|\det(Df^n(x))|/m_n.$$
In these coordinates, the map $Dg_n(0)$ has the form
$\begin{pmatrix} m_{n+1}/m_n & *\\ 0 &M_{n+1}/M_n\end{pmatrix}$.

Note that
\begin{equation}\label{e.bound-m}
\|Df^{-1}\|^{-1}\leq \frac{m_{k+1}}{m_k}\leq \|Df\|,\quad
{M_{k+1}/M_k}\leq \|Df^{-1}\| \;|\det Df|.
\end{equation}
\medskip

We introduce the linear change of coordinates $\Delta_n$ on $T_{f^n(x)}S$
which is defined in the coordinates $E_n\oplus F_n$ by
the diagonal map $\Delta_n=\operatorname{Diag}(A_n,A_n\;B_n)$ where 
$$A_n=\sum_{k\geq 0}\lambda_1^{-k}m_{n+k}/m_n$$
$$B_n=\sum_{k= 0}^n\lambda_2^{k-n}\frac{M_k/M_n}{m_k/m_n}.$$
Assumptions~\eqref{e.stable} and~\eqref{e.bound-C_0} imply that $A_n$ is finite and $A_0\leq C_0$.
Note that $A_n,B_n$ are larger than or equal to $1$ so that
$\|\Delta_n\|=A_nB_n$ and $\|\Delta_n^{-1}\|=A_n^{-1}<1$.
An easy computation gives:
\begin{equation}\label{e.inductionA}
A_{n+1}\;\frac{m_{n+1}}{m_n}\;A_{n}^{-1}=\lambda_1(1-A_n^{-1})<\lambda_1,
\end{equation}
\begin{equation}\label{e.inductionB}
B_{n+1}\frac{M_{n+1}}{M_n}B_n^{-1}=\lambda_2^{-1}\frac{m_{n+1}}{m_n}+\frac {M_{n+1}}{M_n}B_n^{-1}>\lambda_2^{-1}\frac{m_{n+1}}{m_n}.
\end{equation}
In particular
\begin{equation}\label{e.lowerA}
\frac{A_{n+1}}{A_n}\leq \lambda_1\frac{m_{n}}{m_{n+1}}\leq \lambda_1\|Df^{-1}\|,
\end{equation}
{\color{black}
\begin{equation}\label{e.uniformA2}
A_n\leq A_0\;\lambda_1^n\frac{m_0}{m_n}
\leq C_0\;\lambda_1^n\; \tilde \sigma^{-n}.
\end{equation}}
With~\eqref{e.bound-m}, we also have
\begin{equation}\label{e.A-lower} \frac 1 {\|Df\|\; \|Df^{-1}\|} \leq \frac{A_{n+1}}{A_n}.
\end{equation}
{\color{black} From~\eqref{e.stable} and~\eqref{e.bound-C_0} we have
\begin{equation}\label{e.uniformB}
B_n= \frac{m_n^2\lambda_2^{-n}}{|\det(Df^n(x))|}
\sum_{k= 0}^n \lambda_2^k\frac{|\det Df^k(x)|}{m_k^2}\leq
\frac{m_n^2\lambda_2^{-n}}{|\det(Df^n(x))|}
\sum_{k= 0}^n (\lambda_2/\tilde \rho)^k
\leq (\rho/\lambda_2)^nC_0.
\end{equation}}

\medskip
One then defines the local diffeomorphism $h_n=\Delta_{n+1}\circ g_n\circ \Delta_n^{-1}$
and its tangent part $H_n:=\Delta_{n+1}Dg_n(0)\Delta_n^{-1}$.
The map $H_n$ has the form
$\begin{pmatrix} a & d\\ 0 &c\end{pmatrix}$.
Using~\eqref{e.bound-m}, \eqref{e.inductionA}, \eqref{e.inductionB}, \eqref{e.lowerA} and~\eqref{e.A-lower},
one gets the estimates
\begin{equation}\label{e.a}
\frac 1 {\|Df\|\; \|Df^{-1}\|^{2}} \leq |a|=A_{n+1}\frac{m_{n+1}}{m_n}A_{n}^{-1}<{\color{black}\lambda_1},
\end{equation}
\begin{equation}\label{e.c}
\begin{split} {\color{black} \lambda_2^{-1}} |a|={\color{black} \lambda_2^{-1}}A_{n+1}\frac{m_{n+1}}{m_n}A_n^{-1}&<|c|=A_{n+1}{B_{n+1}}\frac{M_{n+1}}{M_n}B_n^{-1}A_n^{-1}\\
&\quad \leq ({\color{black} \lambda_1\lambda_2^{-1}}\|Df\|\;\|Df^{-1}\|+{\color{black} \lambda_1}\|Df^{-1}\|^2\;|\det Df|)
\end{split}\end{equation}
\begin{equation}\label{e.d}
|d|\leq A_{n+1}\;\|Df\|\;A_{n}^{-1}{B_{n}^{-1}}=\|Df\|\;|a|\frac{m_n}{m_{n+1}}{B_{n}^{-1}}\leq \|Df\|\;\|Df^{-1}\|\;|a|.
\end{equation}
In particular, the $H_n,H_n^{-1}$ are uniformly bounded,
and there exists a horizontal cone which is uniformly contracted into itself 
and whose vectors are uniformly expanded under $H_n^{-1}$ by a factor larger than ${\color{black} \lambda_1^{-1}}$.
\medskip

{\color{black} Since the diffeomorphism $f$ is $C^r$, $r>1$,
there exists $\alpha>0$ and $C_f>0$
such that}
$$\|Dh_n(y)-Dh_n(0)\|\leq
\|\Delta_{n+1}\|\;\|\Delta_n^{-1}\|\;C_f \|\Delta_n^{-1}\|\;\|y\|^\alpha\leq
C_f\|\Delta_{n+1}\|\;\|y\|^\alpha.$$
Let us choose $\varepsilon>0$ small.
One can extend  $h_n$ as a global $C^1$-diffeomorphism
$\widehat h_n\colon T_{f^n(x)}S\to T_{f^{n+1}(x)}S$
which is $\varepsilon$-close to the linear map $H_n$ for the $C^1$-topology,
and which coincides with $h_n$ on the ball centered at $0$ and of radius $r_n$ such that
\begin{equation}\label{e.boundr}
{\color{black} r_n^\alpha:=\frac{\varepsilon}{C_f\;\|\Delta_{n+1}\|}=\frac{\varepsilon}{C_f\;A_n\; B_n}.}
\end{equation}
This gives from~\eqref{e.uniformA2}, \eqref{e.uniformB}
and~\eqref{e.condition}:
$${\color{black} r_n^\alpha> \frac{\varepsilon}{C_fC_0^2}\bigg(\frac {\tilde \sigma}{\lambda_1}\bigg)^{n}
\bigg(\frac {\lambda_2}{\rho}\bigg)^{n}> \frac{\varepsilon}{C_fC_0^2}\lambda_1^{n}}.$$

One has obtained a uniformly bounded family of diffeomorphisms
$(\widehat h_n)$ whose inverses expand uniformly a horizontal cone.
The classical stable manifold theorem for sequences of diffeomorphisms
(see for instance~\cite{KH}) asserts that a uniform family of $C^1$-graphs is preserved.

For $r>0$ small, the ball of radius $r$ and centered at $0$ in the graph of $T_xS$
is a curve contracted by the composition $\widehat h_{n-1}\circ\dots\circ \widehat h_0$
by  more than ${\color{black} \lambda_1^n}$, hence is contained in the ball of radius $r_n$.
Conjugating by the coordinates changes $(\Delta_n)$, this proves that
this curve (which has uniform size) is exponentially contracted by the $g_n$,
hence is a stable manifold.
{\color{black} All the constants are still valid for $C^r$-diffeomorphisms that are $C^1$-close to $f$
and have the same $C^r$-bound.}
Since the stable manifold for sequences of diffeomorphisms
depends continuously on $(\widehat h_n)$ for the $C^1$-topology,
the stable manifold for the surface diffeomorphism depends continuously on $(x,f)$.
\end{proof}

\section{Robustness: proof of theorem \ref{p.strong}}
\label{p.strong proof}
{\color{black} The robustness of the strong dissipation is obtained from the uniformity of the stable manifolds with respect to
the measure and the diffeomorphism. The proof requires to check that condition~\eqref{e.stable} holds (for any measure not supported on a sink)
on a set with uniform measure. This result is based on the following version of Pliss lemma.}

\begin{lema}[Pliss]\label{pliss}
For any $\alpha_1<\alpha_2<\alpha_3$,
and any sequence $(a_n)\in (\alpha_1,+\infty)^\NN$
satisfying
$$\limsup_{+\infty} \frac 1 n (a_0+\dots+a_{n-1})\leq \alpha_2,$$
there exists a collection of integers $0\leq n_1<n_2<\dots$ such that
\begin{itemize}
\item[--] for any $k\geq 1$ and $n>n_k$, one has
$\frac 1 {n-n_k} (a_{n_k}+\dots+a_{n-1})\leq \alpha_3$,
\item[--] the upper density $\limsup \frac {k} {n_k}$ of the sequence $(n_k)$
is larger than $\frac{\alpha_3-\alpha_2}{\alpha_3-\alpha_1}$.
\end{itemize}
\end{lema}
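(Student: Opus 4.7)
My plan is to reformulate the target inequality $\frac{1}{n-n_k}(a_{n_k}+\dots+a_{n-1})\leq \alpha_3$ as a monotonicity condition on partial sums, and then extract the indices $(n_k)$ via a running-minimum construction.

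Set $T_n:=\sum_{i=0}^{n-1}(\alpha_3-a_i)$, with $T_0=0$. The hypothesis $\limsup \frac{1}{n}(a_0+\dots+a_{n-1})\leq \alpha_2$ rephrases as $\liminf T_n/n \geq \alpha_3-\alpha_2>0$, so in particular $T_n\to+\infty$. The first clause of the lemma reduces to the condition $T_n\geq T_{n_k}$ for all $n>n_k$. I therefore let $(n_k)$ be the increasing enumeration of
\[
G\;:=\;\{n\geq 0:\; T_m\geq T_n \text{ for all } m>n\},
\]
which I would track via $M_n:=\inf_{m\geq n}T_m$: because $T_m\to\infty$ the infimum is a minimum, one has $n\in G$ iff $M_n=T_n$, and $n\notin G$ forces $M_n=M_{n+1}<T_n$. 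The first clause of the lemma is then built in by construction.

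The density bound would come from a telescoping on $(M_n)$. The sequence is non-decreasing; if $n\notin G$ then $M_{n+1}=M_n$, while if $n\in G$ then
\[
0\;\leq\; M_{n+1}-M_n\;=\;M_{n+1}-T_n\;\leq\; T_{n+1}-T_n\;=\;\alpha_3-a_n\;<\;\alpha_3-\alpha_1.
\]
Summing over $n<N$ yields $M_N-M_0\leq (\alpha_3-\alpha_1)\cdot|G\cap[0,N-1]|$. Combining with $M_0\leq T_0=0$ and the fact that, for every $\varepsilon>0$ and every $N$ large enough, $M_N\geq (\alpha_3-\alpha_2-\varepsilon)N$ (using $T_m\geq (\alpha_3-\alpha_2-\varepsilon)m$ for all large $m$), one gets
\[
\liminf_{N\to\infty}\frac{|G\cap[0,N-1]|}{N}\;\geq\;\frac{\alpha_3-\alpha_2}{\alpha_3-\alpha_1}.
\]
To convert this into the second clause, observe that whenever $|G\cap[0,N-1]|\geq cN$ and $k:=\lfloor cN\rfloor$, the element $n_k$ lies in $[0,N-1]$, hence $k/n_k\geq \lfloor cN\rfloor/N\to c$; letting $c\nearrow (\alpha_3-\alpha_2)/(\alpha_3-\alpha_1)$ gives the desired bound on $\limsup k/n_k$.

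I do not anticipate a serious obstacle. The only mildly delicate points are checking that the infimum defining $M_n$ is genuinely attained (which is why $\alpha_2<\alpha_3$ is used, through $T_n\to+\infty$) and that the telescoping correctly accounts only for good indices strictly below $N$; everything else is bookkeeping.
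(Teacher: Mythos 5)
Your proof is correct and is essentially the paper's argument in different clothing: your set $G$ of forward-record minima of $T_n=\sum_{i<n}(\alpha_3-a_i)$ coincides with the paper's sequence of successive maxima of the partial sums $a_0+\dots+a_{n-1}$ after its normalization $\alpha_3=0$, and your telescoping of the running minimum $M_N$ (each record index contributing at most $\alpha_3-\alpha_1$ while the hypothesis forces linear growth at rate $\alpha_3-\alpha_2$) is the same counting estimate as the paper's inductive bound $a_0+\dots+a_{n_k-1}\geq (k-2)\alpha_1+\text{const}$. If anything, your bookkeeping cleanly yields the stronger lower-density bound, but no new idea is involved.
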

{\color{black} \begin{proof}[Sketch of the proof]
The argument is similar to~\cite[Chapter IV, Lemma 11.8]{M-book}.
Up to replace $a_n$ by $a_n-n\;\alpha_3$ and each $\alpha_i$ by $\alpha_i-\alpha_3$,  one can assume that $\alpha_3=0$.
We build the sequence $(n_k)$ inductively: $n_1$ realizes the maximum of the set $\{a_0+\dots+a_{n-1}, \; n\geq 0\}$, which exists
since $\limsup_{+\infty} \frac 1 n (a_0+\dots+a_{n-1})<0$. One chooses $n_{k+1}$ as the
smallest integer $n>n_k$ which realizes the maximum of $\{a_0+\dots+a_{n-1}, \; n>n_k\}$.
Hence, the integers $n_k$ satisfy for any $n>n_k$ the inequality $a_{n_k}+\dots+a_{n-1}\leq 0$ as required.

In order to estimate the density, one first notes that
$$a_0+\dots+a_{n_{k+1}-1}\geq (a_0+\dots+a_{n_{k}-1})+a_{n_k}\geq (a_0+\dots+a_{n_{k}-1})+\alpha_1.$$
One deduces inductively that for each $k$,
$$a_0+\dots+a_{n_{k}-1}\geq (k-2)\;\alpha_1\; + a_0+\dots+a_{n_2-1}.$$
Combining with our assumption, one gets
$\alpha_2\geq \limsup_k \frac k {n_k}\alpha_1$, which gives since $\alpha_1<0$:
$$\limsup_k \frac k {n_k}\geq \frac{\alpha_2}{\alpha_1}=\frac{\alpha_3-\alpha_2}{\alpha_3-\alpha_1}.$$
\end{proof}}
\medskip

{\color{black} For any $C^{1}$-diffeomorphism $f$ and $\sigma,\tilde \sigma,\rho,\tilde \rho\in (0,1)$, we introduce
$A_f(\sigma,\tilde \sigma,\rho,\tilde \rho)$,  the compact set of points $x\in S$ such that there exists a one-dimensional
subspace $E\subset T_xS$
satisfying~\eqref{e.stable}.}
The previous lemma has the following consequence.
\begin{propo}\label{p1}
{\color{black} Consider $f\in \diff^1(S)$, an invariant compact set $\Lambda$
and $$D=\sup_{x\in \Lambda} |\det D f(x)|,\quad m
=\sup_{x\in \Lambda} \|Df^{-1}(x)\|^{-1}.$$
Let $\tilde \sigma=m$,
$\tilde \rho=m^2/D$, $\sigma=D^{4/5}$ and $\rho=D^{3/4}$.

If $D<m^{9/10}$, then $\frac{\tilde \sigma\tilde \rho}{\sigma\rho}>\sigma$
and for any ergodic measure $\mu$ on $\Lambda$ which is not supported on a sink,
the measure of the set $A_f(\sigma,\tilde \sigma,\rho,\tilde \rho)$
is larger than $1/6$.}
\end{propo}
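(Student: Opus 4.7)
The proof splits into a short algebraic check and a density estimate via the Pliss lemma. For the first, substituting the definitions gives $\frac{\tilde\sigma\tilde\rho}{\sigma\rho} = m^{3}/D^{51/20}$, so the desired inequality reduces to $m^{3} > D^{67/20}$; using $D < m^{9/10}$ and $m\in(0,1)$, one has $D^{67/20} < m^{603/200} < m^{3}$ since $603/200 > 3$.

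For the density bound, fix an ergodic $\mu$ on $\Lambda$ not supported on a hyperbolic sink. By Oseledets' theorem and the dissipation, $\mu$-a.e.\ $x$ has Lyapunov exponents $\lambda^-\leq\lambda^+$ with $\lambda^+\geq 0$ and $\lambda^-+\lambda^+\leq\log D$, so $\lambda^-\leq\log D$ and $\lambda^--\lambda^+\leq\log D$. Let $E=E^s_x$ be the Oseledets stable direction and introduce
\[a_n=\log\|Df|_{E}(f^n x)\|,\quad q_n=2a_n-\log|\det Df(f^n x)|,\]
whose Birkhoff averages are $\lambda^-$ and $\lambda^--\lambda^+$, respectively.

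The four inequalities in \eqref{e.stable} split into two pointwise lower bounds and two upper bounds on the partial sums $\sum_{k<n}a_k$ and $\sum_{k<n}q_k$. The lower bounds are automatic: $\|Df|_{E}(y)\|\geq\|Df^{-1}(y)\|^{-1}\geq m$ gives $a_n\geq\log m$, hence $\sum_{k<n}a_k\geq n\log\tilde\sigma$; similarly $q_n\geq 2\log m-\log D=\log\tilde\rho$ yields $\sum_{k<n}q_k\geq n\log\tilde\rho$. For the upper bounds the plan is to apply the Pliss lemma to $(a_n)$ with $\alpha_3=\log\sigma$, $\alpha_2=\log D$, $\alpha_1=\log m$, and to $(q_n)$ with $\alpha_3=\log\rho$, $\alpha_2=\log D$, $\alpha_1=\log\tilde\rho$. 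A standard Birkhoff-type translation identifies the Pliss density of good starting times along a $\mu$-generic orbit with the $\mu$-measure of the set $B_b$ (respectively $B_d$) of points whose entire forward orbit satisfies the corresponding upper bound.

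Writing $u=-\log D$ and $v=-\log m$, so that the hypothesis reads $v<(10/9)u$, a direct computation gives Pliss densities $u/(5v-4u)$ and $u/(8v-7u)$, both minimised at $v=(10/9)u$ with values $9/14$ and $9/17$. Since $A_f\supseteq B_b\cap B_d$, inclusion-exclusion yields $\mu(A_f)\geq 9/14+9/17-1 = 41/238 > 1/6$. The main subtlety is verifying the Pliss hypothesis $\alpha_1<\alpha_2$; this reduces to $m<D$, which follows from $\log m\leq\lambda^-\leq\log D$ for any ergodic $\mu$ not supported on a sink. The constants $4/5$ and $3/4$ in the definitions of $\sigma$ and $\rho$ are chosen precisely so that this worst-case density arithmetic produces a strict inequality with $1/6$.
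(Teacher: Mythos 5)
Your proposal is correct and follows essentially the same route as the paper: the same algebraic verification of $\tilde\sigma\tilde\rho/(\sigma\rho)>\sigma$ from $D<m^{9/10}$, and the same application of the Pliss lemma to the Birkhoff sums of $\log\|Df|_{E^s}\|$ and of $2\log\|Df|_{E^s}\|-\log|\det Df|$ with the identical choices of $\alpha_1,\alpha_2,\alpha_3$, yielding the densities $9/14$ and $9/17$ and the conclusion $9/14+9/17-1>1/6$. Your explicit justification that $\log m\leq\lambda^-\leq\log D$ (so the Pliss hypothesis $\alpha_1\le\alpha_2$ holds) is a small point the paper leaves implicit, but it does not change the argument.
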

\begin{proof}
{\color{black} For any ergodic measure $\mu$ on $\Lambda$ which is not supported on a sink,
the two Lyapunov exponents $\lambda^-\leq \lambda^+$
satisfy $\lambda^-\leq \log(D)$ and $0\leq \lambda^+$.
Since $\mu$-almost every point $x$ satisfies the Oseledets theorem, for any unit vector $u\in E^s(x)$ we have
$$\frac 1 n \log \|Df^n(x).u\|\underset{n\to +\infty}\longrightarrow \lambda^-\leq \log(D),$$
$$\frac 1 n \log\frac{\|Df^n(x)|_{E^s}.u\|^2}{|\det(Df(x))|}
\underset{n\to +\infty}\longrightarrow  (\lambda^--\lambda^+)\leq \log(D).$$
On the other hand we have the bounds
$$\|Df^n(x).u\|\geq m^n,\quad \frac{\|Df^n(x)|_{E^s}.u\|^2}{|\det(Df(x))|}\geq \left(\frac
{m^2} {D}\right)^n.$$
Since $D<m^{9/10}$, we check:
$$\frac{\tilde \sigma\tilde \rho}{\sigma\rho}=m^3D^{-1-4/5-3/4}> D^{47/60}> \sigma.$$
Using Pliss lemma~\ref{pliss}, the first condition of~\eqref{e.stable}
holds on a set with $\mu$-measure larger than
$$\frac{4/5\log D - \log D}{4/5\log D-\log m}>9/14.$$
Similarly the second condition of~\eqref{e.stable}
holds on a set with $\mu$-measure larger than
$$\frac{3/4\log D - \log D}{3/4\log D-2\log m+\log D}>9/17.$$
Hence~\eqref{e.stable} holds on a set with $\mu$-measure larger than $9/14+9/17-1>1/6$.}
\end{proof}
\medskip

Let $f\in \diffdiss^{r}(S)$ be a diffeomorphism as in the statement of theorem~\ref{p.strong}
{\color{black} and let $\sigma,\tilde \sigma,\rho,\tilde \rho$ given by proposition~\ref{p1}.
We relax the constants and choose $\tilde \sigma_0<\tilde \sigma$, $\sigma_0>\sigma$,
$\tilde \rho_0<\tilde \rho$, $\rho_0>\rho$ such that
$\frac{\tilde \sigma_0\tilde \rho_0}{\sigma_0\rho_0}>\sigma_0$
still holds.}
For any diffeomorphism $g$ that is $C^1$-close to $f$, one introduces the set
{\color{black} $A_g:=A_g(\sigma_0,\tilde \sigma_0,\rho_0,\tilde \rho_0)$.
By proposition~\ref{p1}, its measure is larger than $1/6$ for any $g$-invariant
ergodic measure which is not supported on a sink.
We also define:}
$$X(g):=\{x\in U\cap A_g,\;
\text{ the branches of $W^s(x)$ are not contained in } f(\overline S)\}.$$

The continuity of the stable manifold obtained in Theorem~\ref{t.stable} gives:
\begin{lema}\label{c2}
Consider a set $\mathcal D^r$ of $C^r$ diffeomorphisms which is bounded for the $C^r$ topology.
Let $x\in X(f)$.
If $g\in \mathcal D^r$ is $C^1$-close to $f$, and if $y\in A_g$ is close enough to $x$,
then $y$ belongs to $X(g)$.
\end{lema}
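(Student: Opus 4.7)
The plan is to use Theorem~\ref{t.stable} and the continuity of the local stable manifold to transfer the exit property from $x$ to nearby points $y$ under $C^1$-perturbations of $f$.

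Since $x \in X(f)$, both branches of $W^s_S(x,f)\setminus\{x\}$ meet the open set $S\setminus f(\overline S)$. I pick points $p_1, p_2$ in this intersection, one on each branch, and let $\gamma_i\subset W^s(x,f)$ be the compact arc joining $x$ to $p_i$. Because $p_i\in W^s(x,f)$, the forward images $f^n(\gamma_i)$ shrink exponentially towards the orbit of $x$, so for some fixed $n_0$ the arcs $\delta_i := f^{n_0}(\gamma_i)$ lie inside the local stable manifold $W^s_\varepsilon(f^{n_0}(x),f)$ provided by Theorem~\ref{t.stable} at $f^{n_0}(x)$; the hypothesis~\eqref{e.stable} at this iterated point holds after a bounded, $n_0$-dependent relaxation of the constants which still satisfies the pinching $\tilde\sigma\tilde\rho/(\sigma\rho)>\sigma$.

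Now consider $g\in\mathcal D^r$ that is $C^1$-close to $f$ and $y\in A_g$ close to $x$. The finite orbit $(g^k(y))_{0\leq k\leq n_0}$ is then $C^0$-close to $(f^k(x))_{0\leq k\leq n_0}$, and Theorem~\ref{t.stable} applied to $g$ at $g^{n_0}(y)$ (with the same relaxed constants) produces a local stable manifold $W^s_\varepsilon(g^{n_0}(y),g)$ that is $C^1$-close to $W^s_\varepsilon(f^{n_0}(x),f)$. In particular this local stable manifold contains a point $q_i'$ close to $f^{n_0}(p_i)\in\delta_i$. Pulling back by the diffeomorphism $g^{-n_0}\colon g^{n_0}(S)\to S$, which depends continuously on $g$ in the $C^0$-topology near $f^{n_0}(p_i)$, I obtain a point $p_i':=g^{-n_0}(q_i')\in W^s(y,g)$ close to $p_i$.

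Since $S\setminus f(\overline S)$ is open and contains $p_i$, the point $p_i'$ also belongs to it once $(y,g)$ is close enough to $(x,f)$. Consequently both branches of $W^s(y,g)\setminus\{y\}$ meet $S\setminus f(\overline S)$, hence $y\in X(g)$. The main obstacle is the step where Theorem~\ref{t.stable} is applied at the \emph{iterated} point $g^{n_0}(y)$: the theorem requires the pinching condition~\eqref{e.stable} at that point, whereas $y\in A_g$ only provides it at $y$ itself. Since $n_0$ is fixed once $x$ is chosen, a single bounded relaxation of the constants $(\sigma_0,\tilde\sigma_0,\rho_0,\tilde\rho_0)$ absorbs the degradation from iteration uniformly in $(y,g)$ near $(x,f)$, so that the continuous dependence of Theorem~\ref{t.stable} applies as claimed.
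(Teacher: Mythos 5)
Your argument is correct and takes the route the paper intends: the paper offers no written proof beyond invoking the continuity of stable manifolds from Theorem~\ref{t.stable}, and your globalization step (pushing the compact arcs of $W^s(x)$ forward into the uniform local stable manifold at $f^{n_0}(x)$, using the continuity in $(x,f)$ there, and pulling back by $g^{-n_0}$ to land near the exit points $p_i$ in the open set $S\setminus f(\overline S)$) is exactly the standard way to make that one-line justification rigorous. The only point to polish is the relaxation at the iterated point: replacing $x$ by $f^{n_0}(x)$ turns~\eqref{e.stable} into the same inequalities up to bounded multiplicative constants rather than into a genuine change of the exponential rates, but the proof of Theorem~\ref{t.stable} tolerates such constants (they only enter through $C_0$ and the radii $r_n$), so the continuous dependence you invoke does hold.
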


One can now give the proof of the theorem.

\begin{proof}[Proof of theorem~\ref{p.strong}]
Let $\mathcal D^r$ be a $C^r$-bounded set of $C^r$-diffeomorphisms.
It is enough to check that for any diffeomorphism $g\in \mathcal D^r$
that is $C^1$-close to $f$,
and for any ergodic measure $\nu$ of $g$ which is not a hyperbolic sink,
$\nu(X(g))$ is non-zero.

One can argue by contradiction, consider a sequence of diffeomorphisms $(g_n)$
in $\mathcal D^r$
which converge to $f$ in the $C^1$-topology, and a sequence of ergodic measures $\nu_n$
(not supported on hyperbolic sinks)
converging to an invariant measure $\mu$ of $f$ and assume that
$\nu_n(X(g_n))=0$ for each $n$.
Note that if $\mu$ gives positive measure to a hyperbolic sink, then
$\nu_n$, $n$ large, gives positive measure to the hyperbolic continuation of the sink,
which is a contradiction.
We may now assume that $\mu$-almost every point has one negative Lyapunov exponent
and one non-negative Lyapunov exponent. The same holds for $\nu_n$.
Up to considering a subsequence, one can assume that $(A_{g_n})$
converges for the Hausdorff topology. Note that the limit
is contained in $A_{f}$.

From Proposition~\ref{p1}, there exists a family of compact sets
$Z_n\subset A_{g_n}$ with $\nu_n$-measure larger than $1/6$ which
converge to a compact set $Z\subset A_{f}$, whose $\mu$-measure
is larger or equal to $1/6$.
Since the support of $\mu$ is disjoint from the hyperbolic sinks of $f$,
the set $Z\cap X(f)$ has also measure larger or equal to $1/6$. Hence, up to take slightly smaller subsets,
one can assume furthermore that $Z\subset X(f)$.
Lemma~\ref{c2} (and a compactness argument) implies
that for $n$ large enough, $Z_n$ is contained in $X(g_n)$.
This proves that $\nu_n(X(g_n))>0$ which is a contradiction.
This ends the proof of theorem~\ref{p.strong}.
\end{proof}


\section{Strong dissipation for dynamics close to one-dimensional endomorphisms: proof of Theorem
\ref{t.1D}}
\label{t.1D proof}

We explain in this section how to
build naturally dissipative surface diffeomorphisms from one-dimensional systems
acting on an interval or the circle.

{\color{black}
\subsection{Dynamics close to one-dimensional endomorphisms}
}

\paragraph{Definition of the two-dimensional extension.}
Given a one-dimensional manifold $I$ (the circle $S^1$ or the interval $(0,1)$),
a $C^1$-map $h\colon I\to I$ isotopic to the identity (such that $h(\partial I)\subset \operatorname{Interior}(I)$ in the case of the interval), {\color{black} $\varepsilon>0$ small
and $b\in (-1,1)$ even smaller,}
we get a map $f_b$ on $S:=I\times (-\varepsilon,\varepsilon)$ defined by
\begin{equation}\label{e.deffb}
f_b\colon (x,y)\mapsto (h(x)+y,b(h(x)-x+y)).
\end{equation}
Indeed for any $y\in \RR$ close to $0$ and any $x\in h(I)$,
the sum $x+y$ is well defined and, since $h$ is isotopic to the identity, the difference $h(x)-x$ belongs to $\RR$.

Note that the Jacobian is constant and equal to $b$. When $b\neq 0$,
the map $f_b$ is a diffeomorphism onto its image. When $b=0$
the image $f_0(S)$ is contained in $I\times \{0\}$
and the restriction of $f_0$ coincides with $f\times \{0\}$.

\begin{example}
Let us consider the \emph{quadratic family}
$x\mapsto x^2+c$, for $c\in (-2,-1)$.
It sends the interval
$I=(c/2-1,-c/2+1)$ into its interior.
The map $f_b$ in this case is conjugate to the \emph{H\'enon map}~\eqref{e.henon}
with parameter $a=-b^2/4-c-b/2$ through the map $(x,y)\mapsto (-ax-b/2,-ay-abx)$.
\end{example}

\begin{example}
A class of circle maps isotopic to the identity is the \emph{Arnol'd family} on $S^1$:
\begin{equation}\label{e.arnold}
h_{a,\omega}\colon x\mapsto x+a\sin(2\pi x)+\omega, \quad a,\omega\in \RR.
\end{equation}
\end{example}
\medskip

\begin{figure}
\begin{center}
\includegraphics[scale=0.5]{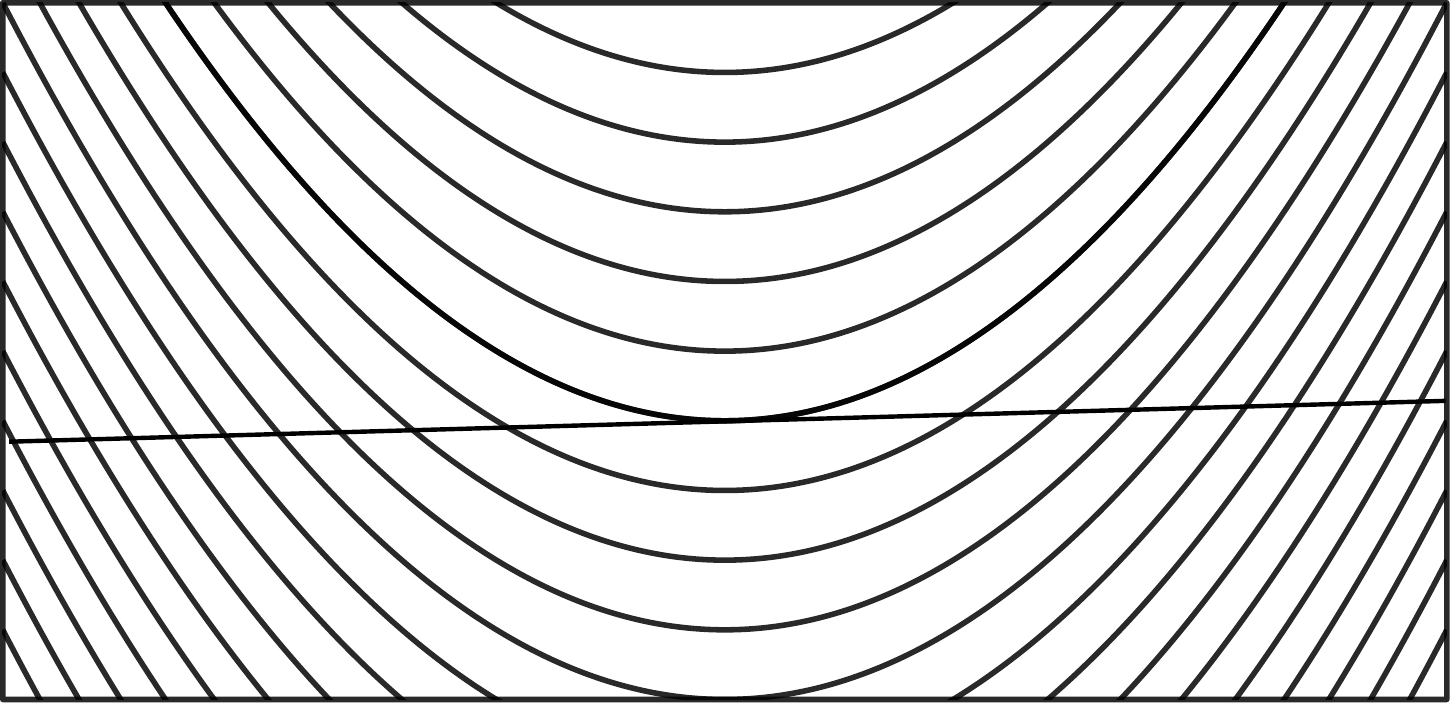}
\end{center}
\caption{
\color{black}
The map $f_0$ associated to the quadratic family $x\mapsto x^2+c$.
Each curve $y=x^2+\mathrm{cte}$ is contracted to a point.
For $b\neq 0$ small, ``most" of the stable manifolds are $C^1$-close to these parabolas.
\label{f.1D}
}
\end{figure}

We can now state the general version of Theorem~\ref{t.1D}.

\setcounter{theo}{\value{theothird}}
\begin{theo}[General version]\label{t.1D}
If $h\colon I\to I$ is a one-dimensional {\color{black} $C^2$}-map, isotopic to the identity such that $h(\partial I)\subset
\operatorname{interior}(I)$ and if $b\in \RR$ is close enough to $0$, then the diffeomorphism
$f_b$ is strongly dissipative.
\end{theo}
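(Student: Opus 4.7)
The first two items of Definition~\ref{SD defi} follow directly: since $h(\partial I)\subset\operatorname{interior}(I)$, for $\varepsilon$ and $|b|$ small enough the image $f_b(\overline S)$ is compactly contained in $S$, and $|\det Df_b|\equiv |b|<1$, so $f_b$ is dissipative. Iterating gives $f_b^n(S)\subset I\times(-C|b|,C|b|)$ for some $C=C(h,\varepsilon)$, so the support of every invariant measure lies in this thin strip. For an ergodic measure $\mu$ not supported on a hyperbolic sink, the constant Jacobian forces $\lambda^-+\lambda^+=\log|b|$; together with $\lambda^+\geq 0$ this yields $\lambda^-\leq\log|b|$ uniformly in $\mu$, a negative quantity that blows up as $b\to 0$.

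I would then apply Theorem~\ref{t.stable} to produce uniform local stable manifolds. Picking $\sigma,\tilde\sigma$ just above and below $e^{\lambda^-}$ and $\rho,\tilde\rho$ just above and below $e^{\lambda^--\lambda^+}$, the pinching condition $\tilde\sigma\tilde\rho/(\sigma\rho)>\sigma$ reduces to a trivial inequality for $|b|$ small, since $\sigma$ is at most a small power of $|b|$. Oseledets' theorem combined with Pliss lemma~\ref{pliss} shows that~\eqref{e.stable} holds on a set of positive $\mu$-measure, and Theorem~\ref{t.stable} produces local stable manifolds of uniform size varying continuously with the point. To globalize I would exploit that the most-contracted singular direction of $Df_b$ converges to the kernel $(1,-h'(x))$ of $Df_0$ as $b\to 0$: for $|b|$ small one builds a narrow cone field $\mathcal C$ around this direction which is strictly $Df_b^{-1}$-invariant and contracted by $Df_b$ at rate $\sim |b|$. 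A graph transform argument identifies $W^s_S(x_0)$ with a $C^1$-graph $y=g(x)$ tangent to $\mathcal C$, whose slope satisfies $g'(x)=-h'(x)+o_b(1)$; integrating gives
\[
g(x)=y_0+h(x_0)-h(x)+o_b(1),
\]
so the stable manifold is $C^0$-close to the fiber of $f_0$ through $(x_0,y_0)$ pictured in Figure~\ref{f.1D}.

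The main difficulty is to verify that this graph exits $S\setminus f_b(S)$ on each side of $x_0$. Since $f_b(S)$ is contained in the thin strip $I\times(-C|b|,C|b|)$ and $|y_0|\lesssim|b|$, it suffices that on each side of $x_0$ the graph reaches $|y|>C|b|$ before its $x$-coordinate leaves $I$; the boundary $\partial I$ is itself in $S\setminus f_b(S)$ since $f_b(I)\subset\operatorname{interior}(I)$. By the displayed formula, this reduces to the oscillation of $h$ on each side of $x_0$ exceeding some fixed amount, which is automatic unless $h$ is essentially constant near $x_0$. In the latter case, the restriction of $f_b$ to that flat region has a single hyperbolic attracting fixed point, contradicting the hypothesis on $\mu$. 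A more delicate point will be to control the cone field near the critical set $\{h'=0\}$, where the cone must open enough to contain the horizontal direction; handling this uniformly in $b$ I expect to be the main technical obstacle.
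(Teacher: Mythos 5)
Your setup (dissipation, trapping into a thin strip, Pliss plus Theorem~\ref{t.stable} to get uniform local stable manifolds on a set of positive measure, and the identification of the stable leaves with graphs close to the fibers $h(x)+y=\mathrm{cte}$ of $f_0$) matches the paper's strategy. But the point you defer as ``the main technical obstacle'' --- the behavior near the critical set $\{h'=0\}$ --- is a genuine gap, and the way you propose to close it does not work. Near a critical point the most contracted direction of $Df_b$ is \emph{not} well approximated by $(1,-h'(x))$ once $|h'(x)|\lesssim |b|$, there is no $Df_b^{-1}$-invariant narrow cone containing the horizontal direction that is still contracted at rate $\sim|b|$, and your estimate $g(x)=y_0+h(x_0)-h(x)+o_b(1)$ becomes useless there: the oscillation of $h$ you need to beat is of order $C|b|$ plus the unquantified $o_b(1)$ error, and both can dominate $|h(x)-h(x_0)|$ near a critical value. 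Your fallback (``unless $h$ is essentially constant near $x_0$, in which case there is an attracting fixed point'') is not a valid dichotomy: small oscillation of $h$ near $x_0$ does not mean $h$ is locally constant, and even a genuinely flat piece need not carry an attracting fixed point if orbits leave it.

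The missing idea is that one should not try to control stable manifolds in the critical region at all. The paper introduces $\mathcal{C}=\{|Dh|\leq\delta\}\times(-\varepsilon,\varepsilon)$ and proves (Lemma~\ref{l6}) that \emph{any} ergodic measure with a non-negative Lyapunov exponent gives $\mathcal{C}$ measure less than $1/15$: if $\mu(\mathcal{C})$ were large, the Birkhoff average of $\log\|Dh\|$ along the orbit would force a negative top exponent. Combined with $\mu(\cA)>1/6$ this leaves a set $\cA\setminus\mathcal{C}$ of measure $>1/10$ on which the stable direction has slope bounded below by $\tfrac34\delta$ (Lemma~\ref{l7}, read off directly from condition~\eqref{e.stable}) and the local stable manifold has size $r_0$ uniform in $b$ (Lemma~\ref{l5}); choosing $\varepsilon\ll r_0\delta$ then forces both branches to exit $I\times(-\varepsilon,\varepsilon)$. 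A positive-measure set suffices by recurrence and invariance of the separation property. You should also note that the constants $\sigma,\tilde\sigma,\rho,\tilde\rho$ must be chosen uniformly in $\mu$ (the paper takes them as explicit functions of $|b|$, $m$ and a fixed $L$), since Oseledets alone gives neither the lower bounds in~\eqref{e.stable} nor uniformity over the family of ergodic measures.
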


\begin{proof}
{\color{black} 
The arguments is close to the proof of the robustness:
for $b=0$, the map $f_0$ is an endomorphism which contracts the curves $h(x)+y=\mathrm{cte}$ to a point:
these curves are analogous to strong stable manifolds. One can check moreover that, for any
ergodic measure which is not supported on a sink, the points in a set with uniform measure
are far from the critical set, implying that these curves cross the domain $I\times (-\varepsilon,\varepsilon)$,
see the figure~\ref{f.1D}.
For $b>0$, the control of the uniformity of the stable manifolds ensures that for points
in a set with uniform measure has stable manifolds close to these parabolas.
}
\medskip

Let $K>1$ such that $\|Dh\|_\infty<K$ and $\|Df_b\|<K$ for any $b$ close to $0$.
{\color{black}
We have $|b|=\|\det Df_b\|_\infty$ and set $m(b)=\|Df^{-1}_b\|^{-1}_\infty$.}
Note that
$$\frac{|b|}{5K}\leq m:=\|Df_b^{-1}\|_\infty^{-1}<  |b|.$$
Let us choose $\delta>0$ small such that
\begin{equation}\label{e.choice-delta}
\frac{2\log K}{2\log K+\frac 1 2 |\log \delta|}<1/15.
\end{equation}

We choose a uniform $L\gg K$ and
{\color{black} we set
$$\sigma(b):=L.|b|,\quad \tilde \sigma(b)=m(b),\quad
\tilde \rho(b):=m^2/|b|,\quad \rho=L^2.|b|.$$
Note that
$\frac{\tilde \rho\tilde \sigma}{\rho\sigma}=m^3/(L.|b|)^3
\geq 1/(5KL)^3$
is larger than $\sigma$ when $|b|$ is small enough.}

We introduce the set $\cA=\cA(f_b)$ of points $x$ having a direction
$E\subset T_xS$ satisfying~\eqref{e.stable}.
Then, the same proof as for proposition~\ref{p1} shows that,
for any ergodic measure ${\color{black} \mu}$ having a non-negative Lyapunov exponent, ${\color{black} \mu}(\cA)> {\color{black} 1/6}$.
In particular $\cA$ is non-empty.

Our goal now is to prove that both branches of the stable manifold of points of $\cA$ intersect $S\setminus f_b(S)$.
For that purpose we recast the proof of theorem~\ref{t.stable}
in order to get a uniformity for the local stable manifold of points in ${\color{black} \cA}$.

\begin{lema}\label{l5} For any $\eta>0$, there are $r_0,b_0>0$ such that any map $f_b$, $|b|<b_0$ has the following property:
at any $x_0\in \cA$,
the $r_0$-neighborhood of $x_0$ in $W^s(x_0)$ is a disc of radius $2r_0$ which is
$\eta$-close to a linear segment:
all the tangent spaces are $\eta$-close to a fixed direction.

\end{lema}

\begin{proof} One reproduces the constructions made at section~\ref{ss.stable},
keeping the dependence in $|b|$.
{\color{black} We set $\lambda_1:= 2\sigma$ and $\lambda_2:=\tilde \rho /2$.}
Note in particular that
$C_0$ in~\eqref{e.bound-C_0} is uniform in $b$.

As before, one defines a local diffeomorphism $h_n=\Delta_{n+1}\circ g_n\circ \Delta_n^{-1}$
from a neighborhood of $0\in T_{f^n(x)}S$ to a neighborhood of $0\in T_{f^{n+1}(x)}S$.
Its tangent map $H_n$ has an inverse of the form
$$H_n^{-1}=\begin{pmatrix} \frac 1 a & \frac {-d}{ac}\\ 0 & \frac 1 c\end{pmatrix}=\frac 1 a \begin{pmatrix} 1 & \frac {-d} c\\ 0 &\frac a c \end{pmatrix}.$$
From~\eqref{e.c} and~\eqref{e.d}, we have
$$\bigg| \frac a c \bigg|\leq {\color{black} \lambda_2=\frac {m^2}{2|b|}\leq |b|,}\quad
\text{ and } \bigg| \frac d c \bigg|\leq
{\color{black} \|Df_b^{-1}\|.\|Df_b\|\lambda_2\leq K.}$$
{\color{black} Let us consider the horizontal cone $\mathcal{C}$ of size $\eta$:
$$\mathcal{C}=\{(u_1,u_2),\; \eta.|u_1|\geq |u_2|\}.$$
One can reduce $\eta>0$ and assume that $\eta K\ll 1$.
Hence the} horizontal cone is preserved
by any linear map which is $\frac \eta {2a}$-close to $H_n^{-1}$.
The $C^2$-norm of $g_n^{-1}$ is bounded by $C.|b|^{-2}$, where $C$ is uniform in $b$.
This shows that $Dh_n^{-1}$ is $\frac \eta {2a}$-close to $H_n^{-1}$ on a ball of radius
$$r_{n+1}=\frac{\eta |b|^2}{2C.a.\|\Delta_n\|.\|\Delta_{n+1}^{-1}\|^2}.$$
Since $\|\Delta_{n+1}^{-1}\|<1$ and $\|\Delta_n\|=A_nB_n$, the estimates~\eqref{e.a}, \eqref{e.uniformA2}, \eqref{e.uniformB} imply
$$r_{n+1}\geq
{\color{black} \frac{\eta |b|^2}{2CC_0^2\lambda_1}.\bigg(\frac {\lambda_2\tilde \sigma}{\lambda_1\rho}\bigg)^{n}\geq 
\frac{\eta|b|}{4CC_0^2L}.(500L^3K^3)^{-n}.}$$
The maps $Dh_n^{-1}$ expand vectors in the horizontal cone
by more than ${\color{black} \lambda^{-1}_1=1/(2L.|b|)}$ from~\eqref{e.a}.
One deduces that there exists a Lipschitz graph containing $0$ in $T_{x}S$ with
uniform size
{\color{black} 
$$r_0\geq \frac{\eta}{8CC_0^2L^2}$$}
 whose iterates by the
sequence of local diffeomorphisms $h_n$ remain Lipschitz for each $n$,
and after $n$ iterates have radius smaller than $r_n$.

Projecting in $S$, by $\exp\circ \Delta_0$, one gets a local stable manifold at $x$
with uniform size $2r_0$ and tangent to the projection of the constant horizontal cone
which is $\eta$-thin.
\end{proof}
\medskip

It remains to control the slope of the local stable manifolds.
This is done outside a neighborhood of a ``critical region".
Let us define
$$\mathcal{C}=\{x, |Dh(x)|\leq \delta\}\times (-\varepsilon,\varepsilon).$$
\begin{lema}\label{l6}
{\color{black} For $|b|$ small enough, and any ergodic measure $\mu$
of $f_b$ having one Lyapunov exponent non-negative,} $\mu(\cA\setminus \mathcal{C})>1/10$.
\end{lema}
\begin{proof}
{\color{black} Since $\mu(\cA)>1/6$,}
it is enough to check that $\mu(\mathcal{C})<{\color{black} 1/15}$.
Note that for $|b|$ small,
the Lyapunov exponent of $\mu$ is bounded above by
{\color{black} $\mu(\mathcal{C}) 2\log(\delta)+(1-\mu(\mathcal{C}))2\log\|Dh\|_\infty$.}
Since $\mu$ does not charge sinks, the exponent is non-negative so that
{\color{black} with~\eqref{e.choice-delta} one gets:}
$$\mu(\mathcal{C})\leq {\color{black} \frac{2\log K }{2\log K+\frac 1 2 |\log \delta|}<1/15.}$$
\end{proof}

\begin{lema}\label{l7}
For points $z$ in $\cA\setminus \mathcal{C}$,
the slope of the direction $E$ is larger than $\frac 3 4 \delta$.
\end{lema}
\begin{proof}
Let $z=(x,y)$, let $v$ be a unit vector tangent to $E$ and let $\alpha$ be the angle
between $v$ and the line $\RR\times \{0\}$.
From~\eqref{e.stable}, its image has norm smaller or equal to $\sigma$.
From the definition of $f_b$, the first projection of $Df_b.v$ has a modulus larger than
$\|Dh(x)\||\cos(\alpha)|-|\sin(\alpha)|$.
Since $z$ does not intersect $\mathcal{C}$,
the modulus $\|Dh(x)\|$ is larger than $\delta$.
One thus gets
$|\tan(\alpha)|\geq \delta - \sigma/|\cos(\alpha)|$
which implies that the slope is larger than $3\delta/4$
when $\sigma=|b|^{4/5}$ is small enough.
\end{proof}

\paragraph{\it End of the proof of theorem \ref{t.1D}.}
{\color{black} From lemmas~\ref{l5} and~\ref{l7}, if $|b|$ is small enough,}
for points in $\cA\setminus \mathcal{C}$ the stable manifold has uniform size
and slope larger than $\delta/2$.
Consequently, if $\varepsilon$ has been chosen small enough, this proves that for $|b|$ small and
for points in $\cA\setminus \mathcal{C}$, both branches of the stable manifold
intersect the boundary of $I\times (-\varepsilon,\varepsilon)$.
{\color{black}
By lemma~\ref{l6}, this set has positive measure for any
ergodic $f_b$-invariant measure $\mu$ having a non-negative Lyapunov exponent.
This proves that $f_b$ is strongly dissipative.}
\end{proof}

{\color{black}
\subsection{H\'enon diffeomorphisms}
Let $f$ be a dissipative H\'enon automorphism of $\CC^2$ with jacobian $b\in \CC$ (with $0<|b|<1$)
and degree $d:=\deg(f)\geq 2$.
In~\cite{DL} it is proved that if $|b|<d^{-2}$, then
for any periodic point $p$ which is not a sink, the connected component of $p$ in $W^{ss}_{\CC}(p)\cap K^-$ is $\{p\}$.
In that setting $W^{ss}_{\CC}(p)$ is the strong stable manifold in $\CC^2$ (a biholomorphic copy of $\CC$),
and $K^-$ is the set of points $z\in \CC^2$ whose backward orbit remains bounded. We explain here
how the proof of this result also gives the version of Theorem~\ref{t.1D} stated in the introduction.
For more details, the reader should consult~\cite{DL}.

Let us consider an ergodic measure $\mu$ for the action of $f$ in $\CC^2$,
which is not a sink. It has a unique negative Lyapunov exponent $\lambda^-(\mu)\leq \log(b)$ and $\mu$-almost every point $x$
has a strong stable manifold $W^{ss}_{\CC}(x)$: it is the image of $\CC$ by a holomorphic parametrization
$\phi_x\colon \CC\to \CC^2$.

Let $G^-\colon \CC^2\to \RR$ denotes the \emph{backward Green function} defined by
$$G^-(z)=\lim_{n\to +\infty} \frac 1 {d^n} \log^+\|f^{-n}(z)\|.$$
For $\mu$-almost every $x$, one gets a non-negative subharmonic function $g^-_x:=G^-\circ \phi_x$ which vanishes exactly no the set $K^-$.
We consider its \emph{order}
$$\rho(g^-_x):=\limsup_{r\to \infty} \frac 1 r \log \log \|g_x^-\|_{B_r},$$
where $B_r$ denotes the ball of radius $1$ centered at $0$.
From~\cite[Section 4]{BS}, it is equal to
$$\rho(g^-_x)=\frac{\log(d)}{|\lambda^-(\mu)|}.$$
Under the assumption $|b|<d^{-2}$, the order is thus smaller than $1/2$.
Since the order is non-zero it is not bounded.
Since the supremum of two subharmonic functions is subharmonic,
one can consider for each $T>0$, the subharmonic function
$x\mapsto \max(g(x), T)-T$
and apply the following version of Wiman's theorem (see~\cite[Theorem 35]{LP}):

\begin{teo*}[Wiman] Let $g\colon \CC\to \RR$ be a non-constant subharmonic function with order smaller than $1/2$.
Then all components of $g = 0$ are bounded.
\end{teo*}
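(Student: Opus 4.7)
The plan is to deduce the theorem from the classical $\cos \pi \rho$ estimate of Beurling--Heins--Kjellberg, the subharmonic analogue of Wiman's minimum-modulus theorem: if $g$ is subharmonic on $\CC$ with order $\rho<1/2$, then
\[
\limsup_{r \to \infty} \frac{B(r, g)}{M(r, g)} \geq \cos(\pi \rho),
\]
where $M(r, g) = \sup_{|z| = r} g(z)$ and $B(r, g) = \inf_{|z| = r} g(z)$. In the form recorded in~\cite[Theorem~35]{LP}, this estimate is directly applicable and is the key quantitative input.

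Granting this, the argument runs as follows. First, a non-constant subharmonic function on $\CC$ is unbounded above: via the Riesz decomposition $g = h + U_\mu$, either the Riesz mass $\mu$ is nonzero and its logarithmic potential forces growth, or $g$ is harmonic and the Liouville theorem for harmonic functions rules out boundedness unless $g$ is constant. Since $M(r, g)$ is moreover nondecreasing in $r$ (by the submean inequality), one concludes $M(r, g) \to +\infty$. As $\rho < 1/2$ yields $\cos(\pi\rho) > 0$, the $\cos \pi \rho$ inequality supplies a sequence $r_n \to \infty$ along which $B(r_n, g) \geq \tfrac{1}{2}\cos(\pi\rho)\, M(r_n, g) \to +\infty$. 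I then argue by contradiction: if $\Gamma$ were an unbounded connected component of $\{g = 0\}$, then being connected and unbounded in $\CC$, it would meet every circle $\{|z| = r\}$ of sufficiently large radius, so $B(r, g) \leq 0$ for all large $r$. This contradicts $B(r_n, g) \to +\infty$, so every component of $\{g = 0\}$ must be bounded.

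The main obstacle is of course the $\cos \pi \rho$ inequality itself, whose classical proof rests on a Carleman-type identity combined with the sharp harmonic-measure estimate for a circular sector; the geometric point is that order strictly less than $1/2$ prevents the Riesz mass of $g$ from being so concentrated along a half-line that $g$ oscillates too drastically between opposite arcs on a large circle. Since this ingredient is imported wholesale from the cited source, the genuine content of the present theorem reduces to the short topological contradiction above, together with the Riesz-decomposition observation ensuring $M(r,g)\to+\infty$.
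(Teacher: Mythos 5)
Your argument is correct. Note first that the paper does not actually prove this statement: it is quoted verbatim from the literature (the citation is to Theorem~35 of Lyubich--Peters~\cite{LP}), so there is no internal proof to compare against. What you propose is the classical derivation: reduce everything to the $\cos\pi\rho$ (minimum-modulus) theorem of Beurling--Heins--Kjellberg, observe that a non-constant subharmonic function on $\CC$ satisfies $M(r,g)\to+\infty$ (your Riesz-decomposition argument works, though the quicker route is that a subharmonic function on $\CC$ bounded above is constant), deduce a sequence of radii $r_n\to\infty$ on which $\inf_{|z|=r_n}g>0$, and conclude by the standard topological observation that an unbounded connected set meets every circle of sufficiently large radius (its image under $z\mapsto|z|$ is an unbounded subinterval of $[0,\infty)$). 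This is exactly how the result is established in the sources the paper points to; the only caveat is that the genuine analytic content -- the $\cos\pi\rho$ inequality -- is itself imported rather than proved, but since the paper imports the entire theorem, your version if anything makes the logical dependence more transparent. One minor remark: the paper's displayed definition of the order (with $\tfrac1r$ in place of $\tfrac1{\log r}$) appears to be a typo, and your reading of the order as $\limsup_r \log\log M(r,g)/\log r$ is the intended one.
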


Hence for $R>0$,
the connected component of $\{z, \varphi_x(z)\leq R\}$ containing $x$ is bounded.

\begin{proof}[Proof of Theorem~\ref{t.1D} for H\'enon maps]
Let us consider a real H\'enon diff\'eomorphism $H_{a,b}$ as defined in the introduction, such that $0<|b|<1/4$.
Its degree equal $d=2$, so that the condition $|b|<d^{-2}$ holds.
Let $\mu$ be any ergodic probability measure on $\RR^2$ which is not supported on a sink.
For $\mu$-almost every point $x$, the strong stable manifold $W^s(x)$ is the restriction of $W^s_{\CC}(x)$
to $\RR^2$.
Moreover, from Pesin theory there exists $\varepsilon>0$ such that
for any $N\geq 1$,
the set of point $y\in W^s(x)$ satisfying
$d(f^k(x),f^k(y))<\varepsilon$ for any $k\geq N$ is a compact subset
for the intrinsic topology of the curve $W^s(x)$.
In particular if one considers a parametrization $\psi_x\colon \RR\to W^s(x)$,
its lift by the parametrization of $W^s_{\CC}(x)$ is proper, i.e.
we have $|\varphi_x^{-1}\circ \psi_x(t)|\to \infty$ as $|t|\to \infty$.
From the previous arguments, one deduces that the connected sets
$\psi_x[0,+\infty)$ and $\psi_x[0,-\infty)$ are not bounded in $\RR^2\subset \CC^2$.
\end{proof}
}

\begin{figure}[ht]
\begin{center}
\includegraphics[scale=0.4]{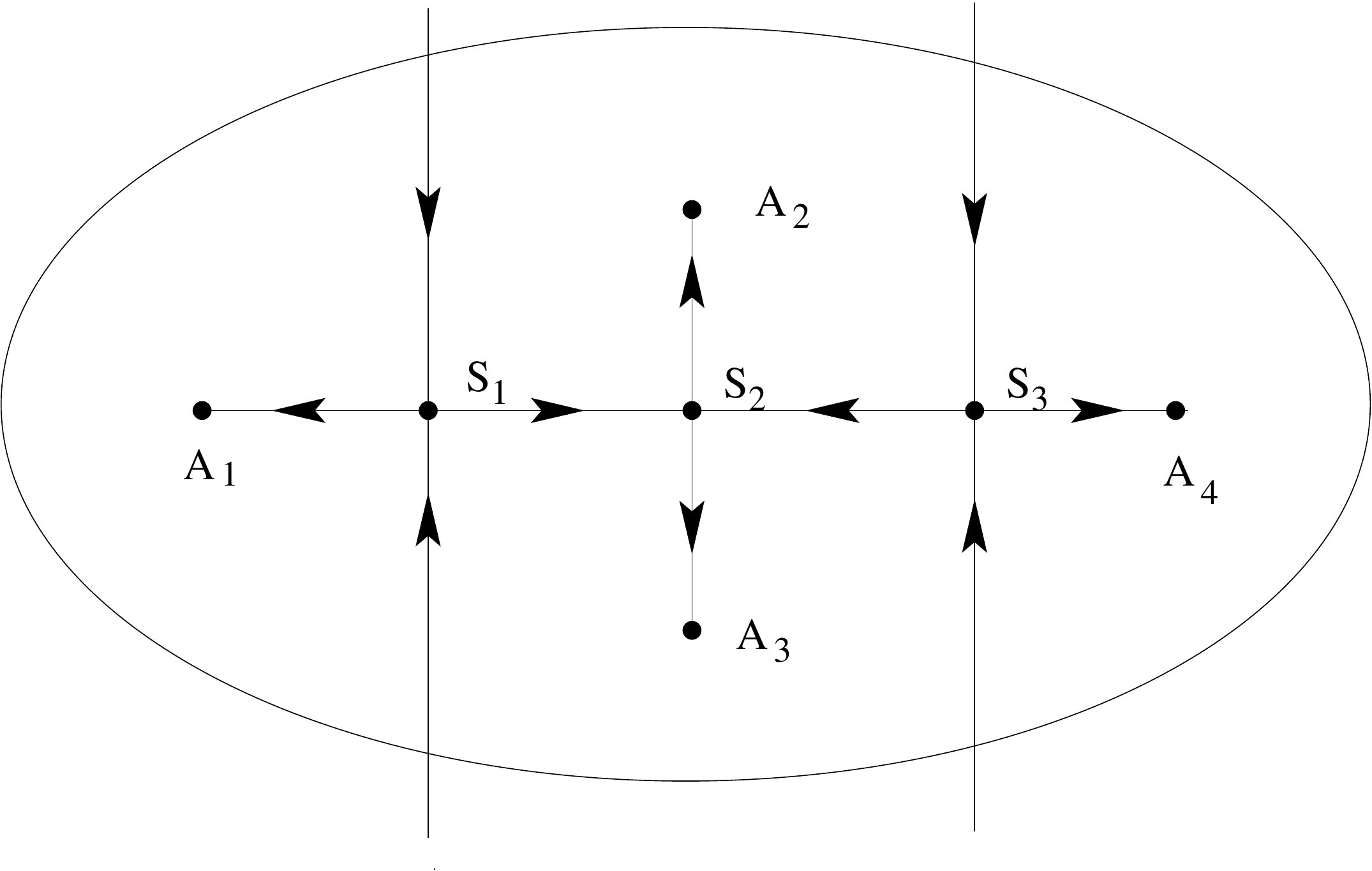}
\end{center}
\caption{ Dissipative diffeomorphism that is not strongly dissipative
(the stable manifold of $S_2$ is contained in a compact subset of the open disc $\DD$ and does not separates).
\label{f.example}}
\end{figure}

{\color{black}
\subsection{A diffeomorphism which is not strongly dissipative}\label{ss.counter-example}
}

We close the section by showing a dissipative diffeomorphisms which is not strongly dissipative. The attracting domain is a disk  and the non-wandering domain is given by four attracting fixed points (denoted as $A_i$ in figure~\ref{f.example} and three saddle fixed points (denoted as $S_i$). Both branches of the stable manifolds of the saddles $S_1$ and $S_3$ are not contained in the disk but both branches of the saddle $S_2$ are in the disk; one branch coincides with one unstable branch of $S_1$ and the other with one unstable of $S_3.$ To make the example dissipative, it is required to chose appropriately the eigenvalues of the saddles.
\medskip


\section{Reduction to one-dimensional dynamics: proof of theorem~\ref{reduction}}
\label{sec proof reduction}

{\color{black} It is well known that the space of leaves for foliations in the plane generates a one-dimensional structure.
In our setting the strong dissipation provides us with a large collection of disjoint curves: the stable manifolds.
The idea of the proof of theorem~\ref{reduction} is to ``quotient" the disc along these curves.}
\medskip

\begin{figure}[ht]
\begin{center}
\includegraphics[scale=0.4]{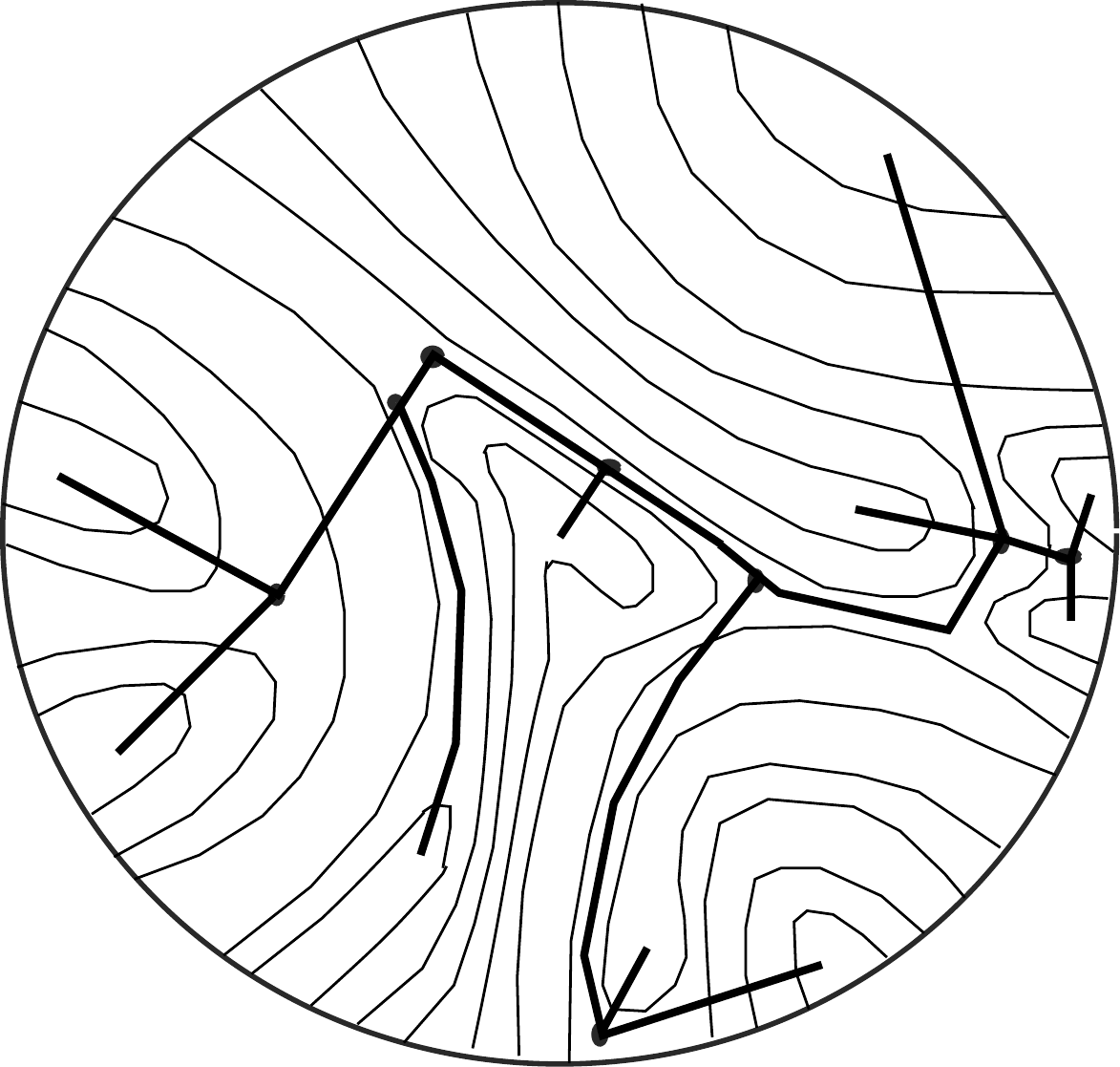}
\end{center}
\caption{ The one-dimensional structure associated to the family of stable manifolds.}
\end{figure}

One chooses a countable collection $\Gamma$ of proper $C^1$-arcs  in $\mathbb{D}$ with the following properties.
\begin{enumerate}
\item Each $\gamma\in \Gamma$ is contained in the stable manifold
$W^{s}(x)$ of a regular point $x$ for an aperiodic ergodic measure
(but $x$ is not necessarily in $\gamma$). In particular, elements of $\Gamma$ are pairwise disjoint or coincide.
\item\label{i.2} Each $\gamma\in \Gamma$ is the $C^1$-limit of arcs in $\Gamma$ and is accumulated on both sides.
\item\label{i.3} For $\gamma\in \Gamma$, the connected components of $f^{-1}(\gamma)\cap \mathbb{D}$
which intersect $f(\mathbb{D})$ are also in $\Gamma$.
\item\label{i.4} For each aperiodic ergodic measure $\mu$,
there exists a full measure set of points $x$ such that the connected components
of $W^{s}(x)\cap \mathbb{D}$ are $C^1$-limits of arcs in $\Gamma$ and are accumulated on both sides.
\end{enumerate}

\paragraph{The space $X$.}
One considers open connected surfaces $s$ of $\mathbb{D}$ bounded by finitely many elements of $\Gamma$.
One denotes $\Sigma$ the collection of sequences $(s_n)$ of such surfaces such that
$\operatorname{Closure}(s_{n+1})\subset s_n$ for each $n$
and one sets $(s_n)\leq (s'_n)$ if for any $n$, there is $m$ such that
$\operatorname{Closure}(s_{m})\subset s'_n$.
Let $\Sigma_0$ be the collection of sequences that are minimal for the relation $\leq$.
One defines $X$ as the quotient of $\Sigma_0$ by the relation ``$(s_n)\leq (s'_n)$ and $(s'_n)\leq (s_n)$".
Note that $\Gamma$ may be identified to a subset of $X$:
the arc $\gamma$ is represented by sequences $(s_n)$ such that $\gamma\subset s_n$ for each $n$
and $\cap s_n=\gamma$; such a sequence exists from our assumption~\ref{i.2} on $\Gamma$.

One defines a topology on $X$. A (countable) basis is defined
by considering all the sequences $(s_n)$ such that the closure of $s_n$ is contained in a given surface $s$
for $n$ large enough. Note that $X$ is separated.
Moreover it is regular: any non-empty closed set $C$ and any point $x$ in the complement can be separated.
Indeed, since $C$ is closed, $x$ has a neighborhood, defined by a surface $s$, disjoint from $C$. By definition of $\Gamma$, one can build
two surfaces $s\supset s_1\supset s_2\supset x$. The surface $s_2$ defines a neighborhood of $x$.
The complement of the closure of $s_1$ defines a neighborhood of $C$. Both are disjoint.

\begin{lema}
$X$ is a real tree.
\end{lema}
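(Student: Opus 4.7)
The plan is to verify the three ingredients in the definition of a real tree: that $X$ is metrizable, that it is path connected, and that any two points of $X$ are joined by a unique topological arc.

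For the metric, note that $X$ is Hausdorff and regular (already established in the text) and second countable, because $\Gamma$ is countable and hence so is the collection of surfaces $s$ bounded by finitely many of its arcs. Urysohn's metrization theorem then supplies a compatible metric on $X$. One may also exhibit an explicit metric: choose summable positive weights $(w_\gamma)_{\gamma\in\Gamma}$ and put
$$d(x,y) := \sum_{\gamma \in \Gamma(x,y)} w_\gamma,$$
where $\Gamma(x,y)$ is the set of $\gamma\in\Gamma$ whose complement in $\mathbb{D}$ has one component eventually containing a representative of $x$ and the other eventually containing $y$. The triangle inequality — in fact the stronger four-point condition characteristic of trees — is immediate because two elements of $\Gamma$ are either disjoint or equal.

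For path connectedness, fix $x,y\in X$. The pairwise disjointness of the arcs of $\Gamma$ together with their nesting induces a total order on $\Gamma(x,y)$: declare $\gamma<\gamma'$ iff the component of $\mathbb{D}\setminus\gamma$ containing $y$ also contains $\gamma'$. I would embed this countable linearly ordered set order-preservingly into a dense subset of $[0,1]$, and assign to the remaining parameters the equivalence classes of the minimal sequences of surfaces lying ``between'' consecutive separators. This produces a continuous map $\alpha:[0,1]\to X$ with $\alpha(0)=x$ and $\alpha(1)=y$; continuity is read off directly from the description of the basis.

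For uniqueness, the essential observation is that each $\gamma\in\Gamma$ is a cut point of $X$: removing its equivalence class disconnects $X$ into two components corresponding to the two sides of $\gamma$ in the disc, since every basic neighborhood of a point $x\neq\gamma$ consists of surfaces lying on a definite side of $\gamma$. Consequently any arc from $x$ to $y$ in $X$ must contain every element of $\Gamma(x,y)$, visited in the natural order. The hard part will be upgrading this to genuine uniqueness: I must show that the closure of $\Gamma(x,y)$ in $X$ is already homeomorphic to $[0,1]$ and thus is \emph{the} arc from $x$ to $y$, with no arc being allowed to deviate from it. For this I would invoke properties (\ref{i.2}) and (\ref{i.4}): any putative alternative arc would traverse points which are themselves limits on both sides of arcs from $\Gamma$, so a genuine detour would force some element of $\Gamma(x,y)$ to be crossed twice, contradicting injectivity of the arc. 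This density/accumulation argument is the true content of the lemma.
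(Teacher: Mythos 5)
Your overall strategy --- Urysohn metrization, totally ordering the separating arcs of $\Gamma$, and a cut-point argument for uniqueness --- is the same as the paper's. But there is a genuine gap exactly at the point you yourself label ``the hard part'': you never prove that $\overline{\Gamma(x,y)}\cup\{x,y\}$ is homeomorphic to $[0,1]$, and this single fact is what makes your parametrization $\alpha$ well defined and continuous (``continuity is read off directly from the description of the basis'' is not an argument: a Dedekind cut of the ordered set $\Gamma(x,y)$ could a priori be filled by no point of $X$, or by two distinct ones approached from either side) and what closes the uniqueness step. The mechanism you propose for it --- invoking the accumulation properties~\ref{i.2} and~\ref{i.4} and a ``crossed twice'' contradiction --- points in the wrong direction: density of $\Gamma$ in itself rules out jumps between consecutive separators but not the two-sided-limit problem at a cut. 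What the paper actually proves is that the maximal distance between consecutive separators $\gamma_n$, $n\leq n_0$, tends to $0$ as $n_0\to\infty$: otherwise one extracts an increasing and a decreasing sequence of separators converging to two \emph{distinct} points $a\neq b$, both separating $y$ from $z$, yet separated from each other by no element of $\Gamma$ --- contradicting the definition of $X$ as the quotient determined by the surfaces bounded by $\Gamma$. That no-gap statement is the real content of the lemma and is missing from your proposal.

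Two further points. First, you do not prove compactness of $X$. It is not part of the paper's definition of a real tree, but the theorem asserts the tree is compact, the paper establishes compactness inside this very lemma (a subsequence argument over the countable family $(\gamma_n)$), and it is also what guarantees that the monotone sequences of separators in the argument above converge at all. Second, your explicit metric $d(x,y)=\sum_{\gamma\in\Gamma(x,y)}w_\gamma$ needs a verification that it is positive definite (i.e.\ that any two distinct points of $X$ are separated by some $\gamma\in\Gamma$ --- true, but by the construction of the quotient, not ``immediate'') and, more seriously, that it induces the quotient topology on $X$; since Urysohn already gives metrizability, this digression is dispensable and should not be presented as an alternative proof.
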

\begin{proof}
The space $X$ is metrizable, by Urysohn theorem (a separated, countable basis, regular topological space is metrizable).
\medskip

The space $X$ is compact. Indeed, let $(x_k)$ be a sequence in $X$
and consider the collection $(\gamma_n)$ of arcs of $\Gamma$.
One can assume, for each $n$ that the sequence $(x_k)$ is different from $\gamma_n$
for each $k$ large, hence (up to take a subsequence) is contained in a same component
of the complement of $\gamma_n$. One defines in this way a minimal decreasing sequence $s_n$
of surfaces and all the $x_k$, $k$ large, belong to $s_n$.
Thus the sequence $(x_k)$ converges to the point defined by $(s_n)$.
\medskip

One say that $x$ separates $y$ and $z$ if for any small neighborhood of $x$ (defined by a surface $s$),
$y$ and $z$ are contained in different components of the complement of $s$.
Let us index the elements of $\Gamma$ and consider the subfamily $(\gamma_n)$ of those
that separate $y$ and $z$. They are ordered by the separation property.
The maximum of the distance between two consecutive $\gamma_n$, $n\leq n_0$, goes to zero as
$n_0$ goes to $\infty$. Otherwise, one gets two sequences $\gamma_{n_k}$, $\gamma_{m_k}$,
the first increasing, the second decreasing, for the order between points that separate $y$ and $z$.
Moreover the distance $d( \gamma_{n_k},\gamma_{m_k})$ does not converge to $0$.
These sequences converge to two different points $a,b$ that separate $y$ to $z$.
Since $a,b$ are different, there should exist $\gamma_n$, $n$ large, that separate $a$ to $b$,
contradicting the construction of $a$ and $b$.
This proves that the union of $\{y,z\}$ with the set of points which separate $y$ and $z$
is homeomorphic to the interval $[0,1]$. Thus $X$ is path connected.
Note that any path joining $y$ to $z$ should contain this set of point (since they separate).
This shows that $X$ is a real tree.
\end{proof}

\paragraph{The maps $\pi,h$.} Any point $\gamma\in \Gamma$ naturally projects to $X$.
For each point $x\in \mathbb{D}\setminus \bigcup\{\gamma\in \Gamma\}$, one can consider
the component $s_n$ of $\mathbb{D}\setminus \{\gamma_1\cup\dots\cup \gamma_n\}$ which contains $x$.
One gets in this way a minimal decreasing sequence which defines a point $\pi(x)\in X$.
This map $\mathbb{D}\to X$ is obviously continuous.

The map $f$ induces a continuous map $h$ on $X$, defined as follow.
Let $(s_n)$ be a minimal decreasing sequence and consider
the intersection $C$. Any two points $y,z$ in $C$ have their image by $f$ which are not separated
by a curve $\gamma$. Otherwise, this curve $\gamma$ crosses $f(\mathbb{D})$,
the pre image $f^{-1}(\gamma)$ has a connected component separating
$y,z$, which belongs to $\Gamma$ from the assumption~\ref{i.3}; this proves that the family $\Gamma$ separates $y$ and $z$, a contradiction.
The image of the point $x=\pi(C)$ by $h$ is $\pi(f(C))$.
\medskip

Note that points that are regular for different aperiodic ergodic measures
belong to different stable arcs, hence are mapped to different points in $X$ by the assumption~\ref{i.4}.
Thus different aperiodic ergodic measures project to different measures on $X$.
\medskip

{\color{black} We now bound the topological entropy $h_{top}(f)$ of $f$ and $h$. We fix $\alpha>0$.
By the variational principle, there exists an ergodic measure $\mu$ for $f$
whose entropy satisfies $h_\mu(h)>h_{top}(f)-\alpha$.
If $\mu$ is supported on a periodic measure, we have $h_{top}(f)<\alpha$,
otherwise, $\mu$ is aperiodic.
Let $\nu=\pi_*(\mu)$. By~\cite{LW}, the entropies of $\mu$ and $\nu$ can be compared:
for each $x\in X$, one considers the topological entropy $h_{top}(f,Z)$ of the preimage
$\pi^{-1}(x)$; one then have:
$$h_{\mu}(f)=h_{\nu}(h)+\int h_{top}(f,\pi^{-1}(x))d\nu(x).$$

For $\nu$-almost every point $x$, the preimage $\pi^{-1}(x)$ is contained in the stable set of $x$.
In particular for any $\varepsilon>0$, the orbits of points in $\pi^{-1}(x)$ do not $\varepsilon$-separate
after some time: the topological entropy $h_{top}(f,\pi^{-1}(x))$ is equal to zero.
We have thus obtained $h_{top}(f)<h_\mu(f)+\alpha\leq h_\nu(h)+\alpha\leq h_{top}(h)+\alpha$
and the topological entropies of $f$ and $h$ coincide.}

The proof of theorem~\ref{reduction} is now complete. \qed


\section{Density of periodic points: proof of theorem \ref{t.measure revisited}}
\label{sec measures}

\paragraph{One dimensional sketch.}
\label{one dim sketch}

First  we prove of theorem~\ref{t.measure revisited} in the context of one-dimensional dynamics
(our statement is slightly more general than~\cite[theorem 1]{LSY}). 

\begin{propo}\label{density one dim}
For any continuous map $f\colon[0,1]\to [0,1]$, the set of periodic points is dense in the set of recurrent points.
\end{propo}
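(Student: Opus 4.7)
I would fix a recurrent point $x\in[0,1]$ and $\varepsilon>0$, and construct a periodic point inside $B_\varepsilon(x)=(x-\varepsilon,x+\varepsilon)\cap[0,1]$. If $x$ is itself periodic there is nothing to show, so I assume $x$ is not periodic. The only tool I use is the intermediate value theorem in the following convenient form: if $J\subseteq[0,1]$ is a closed sub-interval and $g\colon[0,1]\to[0,1]$ is continuous with $g(J)\supseteq J$, then $g$ has a fixed point in $J$; equivalently, if $u,v\in J$ satisfy $g(u)\geq u$ and $g(v)\leq v$, then $g$ has a fixed point on the sub-segment with endpoints $u$ and $v$. Applied to $g=f^n$ this yields a periodic point of $f$.

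By recurrence I pick a strictly increasing sequence $n_1<n_2<\cdots$ with $f^{n_k}(x)\to x$; non-periodicity gives $f^{n_k}(x)\neq x$ for all $k$. Up to extracting a subsequence I fall into one of two scenarios: (i) both $\{k:f^{n_k}(x)<x\}$ and $\{k:f^{n_k}(x)>x\}$ are infinite, or (ii) all $f^{n_k}(x)$ eventually lie on the same side of $x$, which by reflection I take to be $f^{n_k}(x)>x$. In scenario (i) I choose $k_1<k_2$ with $a:=f^{n_{k_1}}(x)<x<f^{n_{k_2}}(x)=:b$ inside $B_{\varepsilon/2}(x)$ and set $p=n_{k_2}-n_{k_1}$, so $f^p(a)=b>a$. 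If $f^p(b)\leq b$ the IVT principle applied on $J=[a,b]\subseteq B_\varepsilon(x)$ produces a periodic point immediately. If instead $f^p(b)>b$, I invoke scenario (i) to adjoin a third return $c=f^{n_{k_3}}(x)<x$ with $n_{k_3}>n_{k_2}$ and analyse the IVT configuration of $f^{n_{k_3}-n_{k_1}}$ on $[a,b]$; a finite iteration of this extension scheme always terminates with a valid pair of IVT witnesses, producing the desired periodic point.

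Scenario (ii) is the delicate one. Because no close return lands left of $x$, the forward orbit of $x$ eventually avoids a one-sided punctured neighbourhood of $x$, so $\omega(x)$ has a one-sided structure near $x$. Extracting by compactness a subsequential limit of $f^{n_k-1}(x)$ yields a point $w\in\omega(x)$ with $f(w)=x$; non-periodicity forces $w\neq x$. If $w$ lies above $x$, running the IVT principle on an interval $[x,w']$ ending at a close pre-image $w'$ of a close return of $x$ --- a pre-image produced by continuity of a suitable iterate near $w$ --- locates a periodic point inside $B_\varepsilon(x)$; the symmetric configuration with $w<x$ is handled in the same way.

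The main obstacle is precisely scenario (ii): without two-sided accumulation the natural IVT witnesses are unavailable, and one has to excavate them from the preimage structure of $x$ inside $\omega(x)$ together with a careful localisation of these preimages. This is also the step that will have to be generalised for Theorem~\ref{reduction}: the ``two sides of $x$'' that appear in scenario (i) of the interval argument correspond to distinct branches emanating from $x$ in the real tree, while scenario (ii) corresponds to recurrence concentrated in a single branch at $x$.
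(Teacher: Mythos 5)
Your proposal has genuine gaps, and the case analysis that creates them is avoidable. The decisive step you are missing is the one the paper uses: a \emph{single} nearby return suffices, on either side. Given a recurrent $x_0$ and a return $x_1=f^n(x_0)$ with (say) $x_0<x_1$, set $g=f^n$ and let $k\geq 1$ be the first integer with $g^k(x_1)<x_1$ (it exists because $x_0$ is recurrent for $g$ and $x_0<x_1$). Minimality of $k$ gives $g^k(x_0)=g^{k-1}(x_1)\geq x_1>x_0$, so $t\mapsto g^k(t)-t$ changes sign on $[x_0,x_1]$ and $g^k$ has a fixed point in $(x_0,x_1)$, automatically within $\varepsilon$ of $x_0$. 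Both IVT witnesses are produced \emph{for the same iterate} and \emph{at the endpoints of the small interval} $[x_0,x_1]$; no information about which side the returns accumulate on is needed. Your scenario~(ii), which you correctly identify as the obstacle, is in fact no harder than scenario~(i) once this trick is available.

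As written, neither of your scenarios is closed. In scenario~(i), the subcase $f^p(b)>b$ is resolved by the assertion that ``a finite iteration of this extension scheme always terminates with a valid pair of IVT witnesses''; this is exactly the point that needs proof (you must produce $u,v$ in a small interval with $f^m(u)\geq u$ and $f^m(v)\leq v$ for the \emph{same} $m$, and adjoining further returns does not obviously converge to such a pair). In scenario~(ii), the point $w\in\omega(x)$ with $f(w)=x$ need not be close to $x$, so the interval $[x,w']$ on which you propose to apply the intermediate value theorem is not contained in $B_\varepsilon(x)$; even granting a fixed point of some iterate there, it would not be localised near $x$, and the witnesses on $[x,w']$ are never actually constructed. (Two smaller points: the fact that a recurrent point of $f$ is recurrent for $f^n$ is used implicitly and deserves a word, as in the paper; and the two-dimensional generalisation you allude to is Theorem~\ref{t.measure revisited}, not Theorem~\ref{reduction} --- there the role of the order is played by the separation of the disc by local stable manifolds, which is why the paper's order-free reformulation via the retraction $h$ is the version worth internalising.)
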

\begin{proof}
Let us consider a recurrent point $x_0$ and a nearby iterate $x_1=f^n(x_0)$. Without loss of generality, one can assume
that $x_0<x_1$.
We claim that there is a periodic  point for $g:=f^n$ inside $(x_0, x_1)$.
Note that $x_0$ is still recurrent for $g$. We take the first positive integer $k$ such that $g^k(x_1) < x_1$  
Such an integer exists since $x_0$ is recurrent and $x_0< x_1$.  By the choice of $k$ observe that $g^k(x_0)= g^{k-1}(x_1) \geq x_1$. Therefore we have a continuous map $g^k\colon [x_0, x_1]\to [0,1]$ such that $g^k(x_0)\geq x_1$ and $g^k(x_1)< x_1$. Hence the graph of $g^k$ crosses the diagonal in a point in $(x_0,x_1)$ and so there is a fixed point in $(x_0, x_1).$ 

We recast the previous proof, avoiding an explicit use of the order, and in such a way that it can easily be generalized for strongly dissipative diffeomorphisms in the disk. 

Under the same choices of $x_0$ and $x_1$ as above, we define the intervals $D^-=[0, x_0]$ and $D^+=[x_1, 1]$ and we take the first positive integer $k$ such that $g^k(x_1)\notin D^+.$
Such an integer exists since $x_0$ is recurrent and does not belong to $D^+$.  By the choice of $k$ observe that $g^k(x_0)= g^{k-1}(x_1) \geq x_0$
Let $h\colon [0, 1]\to [x_0, x_1]$ be a continuous map which coincides with the identity on $[x_0,x_1]$
and such that $h([0,x_0])=x_0$, $h([x_1,1])=x_1$.
Then the map $h\circ g^k:[x_0, x_1]\to [x_0,x_1]$ has a fixed point $p\in [x_0, x_1].$
Note that $h\circ g^k(x_0)=x_1$ (since $g^k(x_0)\in D^+$) and $h\circ g^k(x_1)\neq x_1$ (since $g^k(x_1)\not\in D^+$).
Therefore $p$ belongs to $(x_0, x_1)$. Since $h$ is the identity on $(x_0, x_1)$, the point $p$ is a fixed point of $g^k$.
\end{proof}

As a minor detail, observe that in the  previous proof it is enough to choose $k$ such that $g^k(x_1) < x_1$ and $g^{k-1}(x_1) > x_1$.

The key factor in the proof of proposition \ref{density one dim} is a simple one: {\em  a point disconnect an interval}. 
The last observation does not have an immediate two-dimensional counterpart (points does not separate a two dimensional domain).
However, local stable manifolds, under the assumption of strong dissipativeness, do separate a  two dimensional domain.
\bigskip

\begin{proof}[\bf Proof of theorem~\ref{t.measure revisited}]
Let us recall first some classical result and definitions about invariant measure and Pesin theory.

\begin{lema}\label{localst}
Let $f$ be $C^r$-diffeomorphism, $r>1$ of a surface $S$.
Let $\mu$ be an ergodic invariant measure with compact support having two Lyapunov exponents
$\lambda^-<0\leq \lambda^+$.

Then there exist $C,\lambda>0$, a compact set $B$ with positive $\mu$-measure
(called \emph{Pesin block}) and a continuous family of $C^1$-embeddings
$(\phi_x)_{x\in B}$ of $[-1,1]$ into $S$
such that for $x\in B$,
\begin{itemize}
\item[--] $\phi_x(0)=x$ and the stable manifold $W^s(x)$ contains the image of $\phi_x$,
\item[--] for any $n\geq 0$, the length of $f^n(\phi_x([-1,1]))$ is smaller than $C e^{-\lambda.n}.$
\end{itemize}
\end{lema}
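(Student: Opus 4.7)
The plan is to deduce Lemma~\ref{localst} from the uniform stable manifold theorem (Theorem~\ref{t.stable}) of Section~\ref{ss.stable}, combined with a standard Egorov/Lusin extraction. First I would produce, via Oseledets and Egorov, a positive measure set on which the hypotheses of Theorem~\ref{t.stable} hold for a high iterate $f^N$; then extract a compact subset where the resulting family of stable manifolds depends continuously on the point; and finally derive the length estimate from the exponential contraction built into the construction.

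Concretely I would fix $\varepsilon>0$ small with $\varepsilon<-\lambda^-/5$, and set
\[
\sigma=e^{N(\lambda^-+\varepsilon)},\;\tilde\sigma=e^{N(\lambda^--\varepsilon)},\;\rho=e^{N(\lambda^--\lambda^++\varepsilon)},\;\tilde\rho=e^{N(\lambda^--\lambda^+-\varepsilon)}
\]
for an integer $N$ to be chosen. A direct computation gives $\tilde\sigma\tilde\rho/(\sigma\rho)=e^{-4N\varepsilon}>e^{N(\lambda^-+\varepsilon)}=\sigma$, so the pinching assumption of Theorem~\ref{t.stable} is fulfilled. Applying Oseledets together with Egorov's theorem exactly as in Remark~3 following Theorem~\ref{t.stable}, I would pick $N$ large so that there is a measurable set $A$ of positive $\mu$-measure on which every point $x$ satisfies condition~\eqref{e.stable} for the iterate $g:=f^N$ with these parameters.

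Theorem~\ref{t.stable} then provides a $C^1$ local stable manifold of uniform size at each $x\in A$, depending continuously in the $C^1$-topology on $x\in A$. Because the Oseledets direction is measurable, this family is measurable, so Lusin's theorem extracts a compact subset $B\subset A$ of positive $\mu$-measure on which the dependence is continuous. Parametrizing each local stable manifold by arc length, rescaled so that $\phi_x(0)=x$ and the image has uniform diameter, produces the required embeddings $\phi_x\colon[-1,1]\to S$. For the length estimate, the proof of Theorem~\ref{t.stable} shows that vectors tangent to $\phi_x([-1,1])$ lie in a cone contracted uniformly by $Dg$ at rate at most $\sigma$; iterating gives $\operatorname{length}(g^k(\phi_x([-1,1])))\leq C_0\sigma^k$ with $C_0$ uniform. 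Writing an arbitrary $n$ as $n=kN+r$, $0\leq r<N$, and absorbing $\|Df\|^r$ into a multiplicative constant yields $\operatorname{length}(f^n(\phi_x([-1,1])))\leq Ce^{-\lambda n}$ with $\lambda:=-\log\sigma/N>0$.

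The main delicate point is not the existence of the arcs, which Theorem~\ref{t.stable} supplies directly, but the measurability needed to invoke Lusin: one must check that the parametrizations produced by Theorem~\ref{t.stable} depend measurably on $x\in A$, which follows from uniqueness of the local stable manifold as a graph over the measurably varying Oseledets direction in the appropriate chart. A related mild technicality is to control lengths along the entire arc rather than just at $x$; this is absorbed by noting that points in $\phi_x([-1,1])$ approach the orbit of $x$ exponentially, so the cone contraction at $x$ transfers to the whole local stable arc.
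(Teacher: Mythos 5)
Your proof is correct, but it takes a different route from the paper. The paper's own proof is two sentences: it invokes Pesin's stable manifold theorem \cite{Pe} to get a measurable family of local stable manifolds $(\varphi_x)$ on a positive-measure set $X$, and then applies Lusin's theorem to extract a compact $B\subset X$ on which the family is continuous and the contraction constants are uniform. You instead derive the lemma internally from Theorem~\ref{t.stable}, making explicit the parameter choice $\sigma,\tilde\sigma=e^{N(\lambda^-\pm\varepsilon)}$, $\rho,\tilde\rho=e^{N(\lambda^--\lambda^+\pm\varepsilon)}$ with $\varepsilon<-\lambda^-/5$ (your verification of the pinching inequality $e^{-4N\varepsilon}>e^{N(\lambda^-+\varepsilon)}$ is right, and the corresponding $C^{1+\alpha}$ condition of Remark~1 is even weaker), which is exactly the reduction sketched in Remark~3 after Theorem~\ref{t.stable} but never carried out in the text. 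What your approach buys is self-containedness and a slightly stronger conclusion: the set of points satisfying condition~\eqref{e.stable} for $f^N$ is already compact and Theorem~\ref{t.stable} gives continuity of the stable arcs on all of it, so your Lusin step is needed only for the measurability of the Oseledets splitting used to locate a positive-measure subset, not for the continuity itself; the paper's route is shorter but leans on an external reference and on Lusin for the continuity. Your handling of the two technical points (interpolating $n=kN+r$ to pass from contraction under $f^N$ to the length estimate for all $n$, and transferring the cone contraction from $x$ to the whole arc) is sound and matches what the uniform construction in Section~\ref{ss.stable} provides.
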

\begin{proof}
From~\cite{Pe}, there exists a measurable set $X\subset S$ with positive measure and
a measurable family of $C^1$-embeddings $(\varphi_x)_{x\in X}$ such that the image of $\varphi_x$ is a local
manifold of $x$. Lusin's theorem allows to find a compact set $B\subset X$ as required.
\end{proof}

To prove the theorem, since any regular point has an iterate in a hyperbolic block, it is enough to show that arbitrary close to any point $x\in B$ there is a periodic point. By Poincar\'e recurrence theorem, one can assume that any point in $B$ is positively recurrent.

Since $f$ is strongly dissipative, both branches of $W^s(x_0)\setminus \{x_0\}$ meets $S\setminus f(S)$.
For any $\delta>0$, it is thus possible to find a forward iterate $f^n(S)$ which intersect both branches at two points
$\delta/2$-close to $x$ in $W^s(x)$. Moreover one can modify the boundary of $f^n(S)$ in order
to get a $C^1$-loop $\gamma$ transverse to $W^s(x)$. The disc $D\subset S$ bounded by $\gamma$
still satisfies $f(\overline D)\subset \operatorname{Interior}(D)$.

Let $n\geq 1$ be a large integer such that $f^n(x)$ belongs to $B$ and is close to $x$.
Then by Lemma~\ref{localst}, the local manifold of $f^n(x)$ is $C^1$-close to the local manifold of $x$,
and in particular is also transverse to $\gamma$.
One deduces that the connected components of $W^s(x)\cap D$ and $W^s(f^n(x))\cap D$
containing respectively $x,f^n(x)$, together with small arcs in $\gamma$ enclose a region $R$ diffeomorphic
to the square. If $m$ has been chosen large enough, the diameter of $R$ is smaller than $\delta$.
Moreover $D\setminus R$ has two connected components, whose closure are two topological discs, denoted by $D^+,D^-$:
for instance one can choose $x\in D^-$ and $f^n(x)\in D^+$.
By construction $D^-\cap R$ (resp. $D^+\cap R$)  is contained in the local stable manifold of $x_0:=x$ (resp. $x_1:=f^n(x)$).
See the figure.

\begin{figure}[ht]
\begin{center}
\includegraphics[scale=0.6]{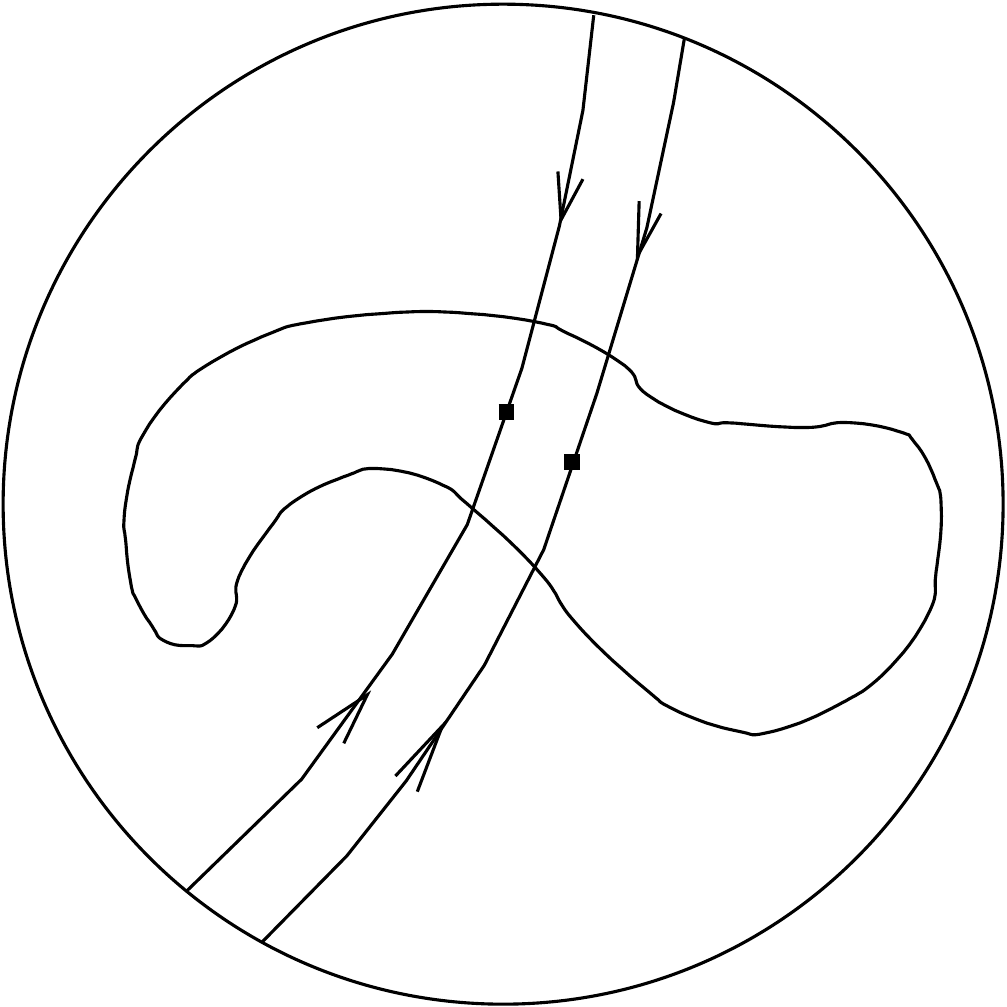}
\begin{picture}(0,0)
\put(-147,91){$D^-$}
\put(-60,67){$D^+$}
\put(-75,91){$\tiny x_1$}
\put(-111,102){$\tiny x_0$}
\put(-90,97){$\tiny R$}
\put(-187,135){$\DD$}
\end{picture}
\end{center}\end{figure}

Let $g=f^k$ and let $k$ be the smallest positive integer such that $g^k(x_0)\in D^+$ and $g^{k+1}(x_0)\notin D^+$.
Such an integer exists since $x_0$ is recurrent for $g$ and since $x_1=g(x_0)\in D^+.$ 
Similarly as in the one dimensional case, we consider a continuous map $h\colon D\to R$
such that the restriction of $h$ to $R$ is the identity,
$h(D^-)= D^-\cap R$ , $h(D^+)= D^+\cap R$.
In particular, $h\circ g^k$ sends $R$ into itself and therefore has a fixed point $p$ in $R$.

Since $f^k(x_0)\in D^+$ and since $D^-\cap R$ is the local stable manifold of $x_0$, its image meets $D^+$ (and is contained in $D$);
since $D^+\cap R$ is also a stable manifold, either it contains or it is disjoint from $f^k(D^-\cap R)$.
Consequently $f^k(D^-\cap R)\subset D^+$.
Similarly, $f^k(x_1)\notin D^+$, hence $f^k(D^-\cap R)\cap D^+=\emptyset$.

One deduces that $p$ does not belong to $R\cap (D^+\cup D^-)$: by definition of $h$ it has a unique pre image and $h^{-1}(p)=p$.
This implies $g^k(p)=p$. Hence $f$ has a periodic point arbitrarily close to $x$, as required.
\end{proof}
\bigskip

\begin{proof}[\bf Proof of corollary~\ref{c.henon}]
Let us fix $a\in (1,2)$ and define $S=\{(x,y):\; |x|<1/2+1/a, \; |y|<1/2-a/4\}$ and $C:=\{(x,y), |x|>|y|\text{ and } |x|>3\}$.
Note that the following properties hold:
\begin{itemize}
\item[--] $f(C)\subset C$ and any forward orbit $(x_n,y_n):=H_{a,b}^n(x,y)$ in $C$ satisfies $|x_n|\underset{n\to +\infty}\longrightarrow \infty$.
\item[--] Any point $(x,y)$ has a positive iterate in $S$ or in $C$.
\item[--] The quadratic map $x\mapsto 1-ax^2$ has a unique fixed point in $(-3,-1/2-1/a)$
and any other orbit escapes this interval.
Consequently, if one fixes $\varepsilon>0$ small enough,
for any $b$ close to $0$, the map $H_{a,b}$ has a unique fixed point $p$ in
$(-3,-1/2-1/a)\times (-\varepsilon, \varepsilon)$.
Moreover if $b$ is close enough to $0$, any point in
$(-3,-1/2-1/a)\times (-\varepsilon, \varepsilon)$ has a forward iterate in $S$ or
in $C$.
\item[--] If $b$ is chosen close to $0$,
the region $\{|x|<3,|y|<3\}$ is mapped by $H_{a,b}$ in the union of $C$ with
$(-3,-1/2-1/a)\times (-\varepsilon, \varepsilon)$.
\end{itemize}
One deduces:
\begin{lema}
For any $a\in (1,2)$, if $|b|>0$ is close enough to $0$,
the H\'enon map has a unique fixed point $p$ in $(-3,-1/2-1/a)\times (-1, 1)$
and any forward orbit satisfies one of the following properties:
\begin{itemize}
\item[--] it escape to infinity (the orbit has no accumulation point in the plane),
\item[--] it converges to $p$,
\item[--] it is attracted by the trapping region $S=\{(x,y):\; |x|<1/2+1/a, \; |y|<1/2-a/4\}$.
\end{itemize}
\end{lema}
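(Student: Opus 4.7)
The lemma has two parts: uniqueness of the fixed point $p$, and the orbit trichotomy. Both follow by synthesizing the four bullet points preceding the statement together with the strong dissipativeness of $H_{a,b}$ on $S$ (Theorem~\ref{t.1D-bis}).

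\emph{Uniqueness.} The fixed point condition $(x,y)=H_{a,b}(x,y)$ reduces to $y=-bx$ and $ax^2+(1+b)x-1=0$. This quadratic has exactly two real roots. An elementary algebraic check shows that the inequality $\tfrac{-1-\sqrt{1+4a}}{2a}<-\tfrac12-\tfrac1a$ is equivalent to $(a+1)^2<1+4a$, i.e.\ to $a<2$. Hence at $b=0$ the negative root of $x\mapsto 1-ax^2$ lies strictly inside $(-3,-1/2-1/a)$ for every $a\in(1,2)$, while the positive root lies outside. By continuity in $b$, exactly one fixed point $p$ of $H_{a,b}$ lies in $(-3,-1/2-1/a)\times(-1,1)$ when $|b|$ is small.

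\emph{Trichotomy.} Let $(z_n)=(H_{a,b}^n(z_0))$ be a forward orbit. By Theorem~\ref{t.1D-bis}, $S$ is forward-invariant, so an orbit reaching $S$ is trapped there; by observation~1, $C$ is forward-invariant with $|x_n|\to\infty$, so an orbit reaching $C$ escapes to infinity. I would then prove a short escape lemma: any point $(x,y)$ with $|x|\geq 3$ or $|y|\geq 3$ either already lies in $C$, or else $H_{a,b}(x,y)$ lies in $C\cup\{|x'|<3,|y'|<3\}$. This uses only that $|y'|=|b||x|$ is small for $|b|$ small, so if $|x'|=|1-ax^2+y|>3$ then automatically $|x'|>|y'|$ and $z_1\in C$, while if $|x'|\leq 3$ then $z_1$ re-enters the bounded box. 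Combined with observation~4, which sends $\{|x|<3,|y|<3\}$ into $C\cup U$ for $U:=(-3,-1/2-1/a)\times(-\varepsilon,\varepsilon)$, this shows that any orbit which does not escape and does not enter $S$ must be contained in $U$ from some iterate on.

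\emph{Closing the third case.} For $|b|$ small, $p$ is a hyperbolic saddle: the derivative of $x\mapsto 1-ax^2$ at its negative fixed point has modulus $>1$ for every $a>1/2$, and the Jacobian $|b|$ of $H_{a,b}$ at $p$ is tiny, so $DH_{a,b}(p)$ has one expanding and one strongly contracting eigenvalue. By the local stable manifold theorem, any orbit confined to a small neighborhood of such a saddle must lie on its local stable manifold, and therefore converges to $p$; this disposes of the third case. The main obstacle is the elementary escape lemma in the second paragraph, which requires a brief case analysis according to the sign and magnitude of $y$ outside the box $\{|x|<3,|y|<3\}$; but the smallness of $|b|$ (so that $|y'|=|bx|$ is forced to be small) makes the cases routine, and once this step is settled the trichotomy is a direct synthesis of the four structural observations.
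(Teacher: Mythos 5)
Your route is the one the paper itself takes: the paper's ``proof'' of this lemma consists of the four structural observations followed by ``one deduces'', so your job is exactly to supply the deduction, and your uniqueness computation (the negative root of $ax^2+(1+b)x-1=0$ lies in $(-3,-1/2-1/a)$ iff $a<2$, plus continuity in $b$) is correct and matches what the paper's third observation asserts for the quadratic map. The first genuine problem is your one-step escape lemma, which is false as stated. The justification ``$|y'|=|b||x|$ is small for $|b|$ small'' ignores that $|x|$ is unbounded: for any fixed $b\neq 0$ take $|x|\geq 3/|b|$ and $y=ax^2-1$, so that $|y|>|x|$ and the point is not in $C$; then $H_{a,b}(x,y)=(0,-bx)$ has $|x'|=0$ and $|y'|=|b||x|\geq 3$, hence lies in neither $C$ nor the box $\{|x'|<3,\,|y'|<3\}$. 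The statement is salvageable — from such an image one has $|y_{n+1}|=|b||x_n|\leq 3|b|<3$, which forces the next iterate into $C$ or the box; more systematically, along an orbit avoiding both $C$ and the box one has $|x_n|\leq |b|\,|x_{n-1}|$ whenever $|x_n|>3$, so $|x_n|$ drops below $3$ in finitely many steps — but the one-step claim you propose to prove cannot be proved, and the case analysis is not ``routine'' in the form you describe.

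The second gap is in closing the third case. An orbit that neither escapes nor enters $S$ is eventually confined to $U=(-3,-1/2-1/a)\times(-\varepsilon,\varepsilon)$, a rectangle of horizontal width $5/2-1/a\approx 2$; this is not the ``small neighborhood of the saddle'' in which the local stable manifold theorem lets you conclude that confined orbits lie on $W^s_{loc}(p)$, and a priori the maximal invariant set of $H_{a,b}$ in $\overline U$ could contain other orbits. What actually saves the argument — and what the paper's third observation encodes in its one-dimensional form ``any other orbit escapes this interval'' — is that $x\mapsto 1-ax^2$ is \emph{uniformly} expanding on the whole interval $(-3,-1/2-1/a)$, since $|{-2ax}|\geq 2a(\tfrac12+\tfrac1a)=a+2>3$ there; for $|b|$ small a cone-field perturbation of this uniform horizontal expansion shows that the maximal invariant set of $H_{a,b}$ in $\overline U$ reduces to the local stable manifold of $p$, on which orbits converge to $p$. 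You need this global expansion on all of $U$, not merely the hyperbolicity of $DH_{a,b}(p)$.
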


Since the dynamics of $H_{a,b}$ is strongly dissipative in $S$
(theorem~\ref{t.1D-bis}), the theorem~\ref{t.measure revisited} may be applied in this region
and concludes the proof.
\end{proof}


\bigskip

\begin{tabular}{l l l}
\emph{Sylvain Crovisier}
& \quad\quad \quad &
\emph{Enrique Pujals}
\medskip\\

Laboratoire de Math\'ematiques d'Orsay
&& IMPA\\
CNRS - UMR 8628
&& Estrada Dona Castorina 110,\\
Universit\'e Paris-Sud 11
&&  22460-320 Rio de Janeiro, Brazil.\\
Orsay 91405, France.
&&
\end{tabular}

\end{document}